\newtheorem{theorem}{Theorem}[section]
\newtheorem{lemma}[theorem]{Lemma}
\newtheorem{corollary}[theorem]{Corollary}
\theoremstyle{definition}
\theoremstyle{remark}
\newtheorem{remark}[theorem]{Remark}
\numberwithin{equation}{section}
\newcommand{\Ref}[1]{(\ref{#1})}
\newcommand{\m}{{\bf m}}
\newcommand{\R}{{\mathbb R}}
\begin{document}

\title
{Computing the invariant measure and the Lyapunov exponent for 
one-dimensional maps using a measure-preserving polynomial basis}

\author{Philip J. Aston\footnote{Department of Mathematics, University of Surrey, Guildford, Surrey GU2 7XH, UK, \texttt{P.Aston@surrey.ac.uk}}, 
Oliver Junge
\footnote{Technische Universit\"at M\"unchen, Zentrum Mathematik,
Boltzmannstr. 3, D-85747 Garching, Germany, 
\texttt{oj@tum.de}}}


\date{November, 2011}

\maketitle

\begin{abstract}
We consider a generalisation of Ulam's method for approximating
invariant densities of one-dimensional chaotic maps. Rather than use
piecewise constant polynomials to approximate the density, we use 
polynomials of degree $n$ which are defined by the requirement that 
they preserve the measure on $n+1$ neighbouring subintervals. Over the
whole interval, this results in a discontinuous piecewise polynomial
approximation to the density. We prove error results where this approach
is used to approximate smooth densities. We also consider the computation
of the Lyapunov exponent using the polynomial density and show that the
order of convergence is one order better than for the density itself.
Together with using cubic polynomials in the density approximation, this
yields a very efficient method for computing highly accurate estimates
of the Lyapunov exponent.  We illustrate the theoretical findings
with some examples.
\end{abstract}

\section{Introduction}

Suppose that a one-dimensional piecewise smooth map $g:I\to I$ on a 
compact interval $I\subset\R$ has a unique chaotic attractor with an
absolutely continuous invariant measure $\mu$. We consider piecewise
polynomial approximations to the density $d$ associated with $\mu$.  
To this end, we define a partition of the interval $I$ by dividing it into a 
number of equal subintervals $I_i=[x_{i-1},x_i]$, $i=1,2,\ldots,N$, each of 
length $h=(x_N-x_0)/N$. For each subinterval, we define
$$m_i=\int_{I_i}d\mu, \quad i=1,2,\ldots,N.$$
Thus, $m_i$ is the mass of the interval $I_i$ with respect to the invariant
measure $\mu$. Clearly we have the property that
$$\sum_{i=1}^Nm_i=1,$$
since $\mu$ is a probability measure. We want to approximate the density $d$ 
of $\mu$ given that the only information that we have about $\mu$ is the 
masses $m_i$, $i=1,2,\ldots,N$.

For approximating invariant densities, Ulam's classical method \cite{Ul60a}
consists in using piecewise constant
approximations on each subinterval with the constants 
being found by computing a fixed point (i.e.\ an eigenvector at the
eigenvalue $1$) of a (discretized) Markov operator, the so called
\emph{Frobenius-Perron operator} or more generally \emph{transfer
operator}.  This operator describes how measures (and densities) are
pushed forward under the dynamics of the map $g$.
Ulam's method has been proved to be convergent for piecewise expanding maps 
\cite{Li76a}. Since then much effort has gone into analysing the method 
\cite{BoMu01a,DeFrSe00a,MR1278389,DiLi91a,DiLi98a,DiZh02a}, computational
questions \cite{MR2319207,Hu94a}, extending the proof to more general 
classes of transformations 
\cite{DiLiZh02a,Fr96a,Fr98a,Fr99a,Hu96a,Hu98a,HuMi92a,Mu97a} or extending 
the method to higher dimensional systems
\cite{DeHoJu97a,DeJu98a,DeJu99a,DiZh96a,Fr95a,Ju01a,JuKo08a}.

For non-smooth invariant densities, a piecewise constant approximation may
be a good choice.  However, in the smooth case (for example if the map is
perturbed by white noise), it would be better to use a higher order
approximation.  Higher order polynomial approximations to the invariant
density have  been  obtained by Ding and coworkers 
\cite{DiDuLi93a,DiLi98a,DiRh04a,DiZh99a}. They used a Galerkin method for
finding a piecewise polynomial approximation to the density but did not go
on to consider the computation of the Lyapunov exponent.

Increasingly, numerical
methods are being developed which incorporate and preserve features of the 
problem being considered. One well-known example of this is geometric 
integrators for ODE's which encode properties of the ODE's, such as a 
symplectic structure or conservation of an invariant, into the numerical 
method \cite{HaLuWa02}. Our approach to the problem of computing an
invariant density is based on this philosophy. The essence of the density
is that the measure should be preserved on any subinterval and so we 
construct a discontinuous piecewise polynomial basis which is defined 
locally by the 
requirement that the measure is preserved on neighbouring intervals.
Combining many such polynomials over the whole interval, the coefficients
of the approximate invariant density with respect to this basis can again
be found by computing a fixed point of the associated discrete
Frobenius-Perron operator.  Using this new basis, which turns out to lead
to a \emph{Petrov-Galerkin} discretisation of the Frobenius-Perron
operator, we improve upon the convergence rates in \cite{DiDuLi93a,DiLi98a}
for piecewise quadratic approximations.  In addition, we consider the
computation of the Lyapunov exponent using the invariant density and give
corresponding error estimates. It turns out (see Theorem \ref{lethm}) that
using our measure preserving basis, one actually gains one order in the
convergence of the Lyapunov exponent.  Using cubic polynomials, this leads
to a highly efficient method for computing estimates of the Lyapunov
exponent.

An outline of the paper is as follows: In Section 2, we derive the
measure-preserving polynomial basis and perform an error analysis for a
density that is approximated by such a
polynomial. Section 3 considers the problem of integration when the density
is replaced by a polynomial while Section 4 gives an error analysis when
the density is found as a solution of the discretised Frobenius-Perron
fixed point equation. The computation of the Lyapunov exponent using the 
polynomial
approximation to the density is considered in Section 5, and the errors in
the Lyapunov exponent are derived. These results are illustrated with some
examples. Finally, in Section 6, two extensions of this work are described.

\section{Measure-Preserving Polynomial Approximations to the Density}
\label{poly-sec}

Our aim is to construct a piecewise polynomial approximation to an invariant
density. To do this, we start by considering the problem of obtaining a 
polynomial approximation to the
density function over a subset of $n+1$ adjacent intervals
$[x_i,x_{i+n+1}]=\cup_{j=0}^nI_{i+j+1}$ for some $i$. As with
Gaussian quadrature, we prefer to work with the standard interval $[-1,1]$
and so we perform a change of variables to map the interval
$[x_i,x_{i+n+1}]$ onto $[-1,1]$ using
\begin{equation}\label{tofx}
t=t(x)=-1+\frac{2(x-x_i)}{(n+1)h}.
\end{equation}
The corresponding inverse transformation is given by
\begin{equation}\label{xoft}
x(t)=\frac{1}{2}(n+1)h(t+1)+x_i.
\end{equation}
We transform the points $x_{i+j}$ onto the interval $[-1,1]$ by
\begin{equation}\label{tj}
t_j=t(x_{i+j})=-1+\frac{2j}{n+1},\quad j=0,1,\ldots,n+1,
\end{equation}
assuming that the points are evenly spaced. We also define a new
density function $D(t)$ which satisfies
$$D(t)~dt=d(x)~dx,$$
which implies that
\begin{equation}\label{Dt}
D(t)=\frac{dx}{dt}d(x(t))=\frac{1}{2}(n+1)h d(x(t)),
\end{equation}
using \Ref{xoft}. Using this definition, we find that
\begin{equation}\label{newint}
\int_{x_i}^{x_{i+n+1}}f(x)d(x)~dx=\int_{-1}^1F(t)D(t)~dt,
\end{equation}
where $F(t)=f(x(t))$. Defining 
\begin{equation}\label{Mj}
M_j=m_{i+j+1},\quad j=0,\ldots,n,
\end{equation}
we obtain
$$M_j=m_{i+j+1}=\int_{x_{i+j}}^{x_{i+j+1}}d(x)~dx=\int_{t_j}^{t_{j+1}}D(t)~dt,
\quad j=0,\ldots,n.$$
We now use the $n+1$ values $M_j$, $j=0,\ldots,n$ to obtain a
polynomial $p_n(t)\in\Pi_n$ which approximates the density
function $D(t)$ which can be used in the integration formula
\Ref{newint} with a higher order Gaussian quadrature method to obtain
higher accuracy for the integral. The criterion which we use to define
the polynomial is that it should preserve the measure on each subinterval,
that is
\begin{equation}\label{condition}
\int_{t_j}^{t_{j+1}}p_n(t)~dt=M_j,\quad j=0,1,\ldots,n.
\end{equation}
We will consider two different methods for determining the
measure-preserving polynomial $p_n(t)$, but we first establish uniqueness.

\begin{lemma}
The polynomial $p_n(t)\in\Pi_n$ satisfying \Ref{condition} is unique.
\end{lemma}
\begin{proof}
Assume that $p_n(t)$ and $q_n(t)$ are both polynomials that satisfy the
conditions \Ref{condition} and define
$$h_n(t)=p_n(t)-q_n(t).$$
Then $h_n(t)\in\Pi_n$ and satisfies the conditions
$$\int_{t_j}^{t_{j+1}}h_n(t)~dt=0,\quad j=0,1,\ldots,n.$$
These conditions imply that, for each value of $j$, there is at least one
point $\tau_j\in(t_j,t_{j+1})$ such that $h_n(\tau_j)=0$. Thus, in total, there 
are at least $n+1$ distinct points in the interval $(-1,1)$ at which $h_n$ 
vanishes. The only polynomial of degree $n$ that has at least $n+1$ distinct
zeros is the zero polynomial, and so $h_n(t)=0$, giving $p_n(t)=q_n(t)$.
\end{proof}

There are two approaches which can be used to determine the
polynomial $p_n(t)$, both of which will be useful later.

\subsection{Constructing the measure-preserving polynomial by interpolating
the measure}

We define the measure $\nu$ associated with the density function $D(t)$ by
\begin{equation}\label{cumden}
\nu(t)=\int_{-1}^tD(\tau)d\tau,
\end{equation}
and we note that
\begin{equation}\label{nuvalues}
\nu(t_j)=\sum_{k=0}^{j-1}M_k,\quad j=1,2,\ldots,n+1,\quad\nu(t_0)=0.
\end{equation}
Strictly speaking, $\nu$ is not a probability measure since
$\int_{-1}^1d\nu\neq 1$, but it is a part of the probability measure $\mu$.
From \Ref{nuvalues}, we have $n+2$ values of the function 
$\nu(t)$ which we can
interpolate using a polynomial $q_{n+1}(t)\in\Pi_{n+1}$.
The derivative of this polynomial will give an approximation to
the density function which satisfies \Ref{condition}. Thus, we
define
$$p_n(t)=q_{n+1}'(t),$$
where prime denotes differentiation with respect to $t$. Using the
Lagrange form of the interpolating polynomial \cite{BF10}, we thus have that
\begin{eqnarray*}
q_{n+1}(t)&=&\sum_{j=1}^{n+1}\nu(t_j)L_{n,j}(t)=\sum_{j=1}^{n+1}\sum_{k=0}^{j-1}M_k L_{n,j}(t)=\sum_{k=0}^n M_k\sum_{j=k+1}^{n+1} L_{n,j}(t)
\end{eqnarray*}
where
$$L_{n,j}(t)=\prod_{\stackrel{i=0}{i\neq j}}^n
\left(\frac{t-t_i}{t_j-t_i}\right).$$
Note that there is no term in the definition of $q_{n+1}(t)$ with
$j=0$ since $\nu(t_0)=0$. Thus, one representation of the polynomial
which satisfies the conditions \Ref{condition} is
$$p_n(t)=\sum_{k=0}^n M_k\sum_{j=k+1}^{n+1} L_{n,j}'(t).$$

\subsection{Constructing the measure-preserving polynomial using an 
appropriate basis}

The derivatives of the basis functions $L_{n,j}(t)$ in the previous
method are clearly quite messy to evaluate. An alternative approach is
to seek the polynomial $p_n(t)$ in the form
\begin{equation}\label{poly}
p_n(t)=\sum_{k=0}^n M_k\ell_{n,k}(t).
\end{equation}
The basis functions $\ell_{n,k}(t)$ can be found by substituting this
form of the polynomial into the conditions \Ref{condition} and
equating coefficients of $M_k$. This implies that for each
$k=0,1,\ldots,n$, we have the $n+1$ conditions
\begin{equation}\label{lconds}
\int_{t_j}^{t_{j+1}}\ell_{n,k}(t)~dt=\left\{\begin{array}{ll}
0, & k\neq j \\[2mm] 1, & k=j \end{array}\right.\hspace{5mm}j=0,1,\ldots,n.
\end{equation}
These equations represent a system of linear equations for the
$n+1$ coefficients in the polynomial $\ell_{n,k}(t)$.

Of course this is only a different representation of the same
polynomial found using the measure and so
$$\ell_{n,k}(t)=\sum_{j=k+1}^{n+1} L_{n,j}'(t).$$
The basis functions defined by the conditions \Ref{lconds} for some low 
values of $n$ are listed in Table
\ref{basisfns}. The three basis functions for the case $n=2$ are shown in
Fig.\ \ref{l2}. Some properties of these basis functions can be determined.

\begin{table}[t]
\begin{center}
\renewcommand{\arraystretch}{1.5}
\begin{tabular}{|c|l|}
\hline
\hspace{1mm}$n=0$\hspace{1mm} &~ $\ell_{0,0}(t)=\frac{1}{2}$ \\
\hline
$n=1$ &~ $\ell_{1,0}(t)=-t+\frac{1}{2}$ \\
      &~ $\ell_{1,1}(t)=\ell_{1,0}(-t)$ \\
\hline
$n=2$ &~ $\ell_{2,0}(t)=\frac{1}{16}(27t^2-18t-1)$\\
      &~ $\ell_{2,1}(t)=\frac{1}{8}(-27t^2+13)$\\
      &~ $\ell_{2,2}(t)=\ell_{2,0}(-t)$ \\
\hline
$n=3$ &~ $\ell_{3,0}(t)=\frac{1}{6}(-16t^3+12t^2+2t-1)$\\
      &~ $\ell_{3,1}(t)=\frac{1}{6}(48t^3-12t^2-30t+7)$\\
      &~ $\ell_{3,2}(t)=\ell_{3,1}(-t)$\\
      &~ $\ell_{3,3}(t)=\ell_{3,0}(-t)$~~~\\
\hline
\end{tabular}
\end{center}
\hspace{5mm}

\caption{Basis functions for the measure-preserving polynomial approximation 
to the density.}
\label{basisfns}
\end{table}

\begin{figure}
\centering
\includegraphics[width=0.5\textwidth]{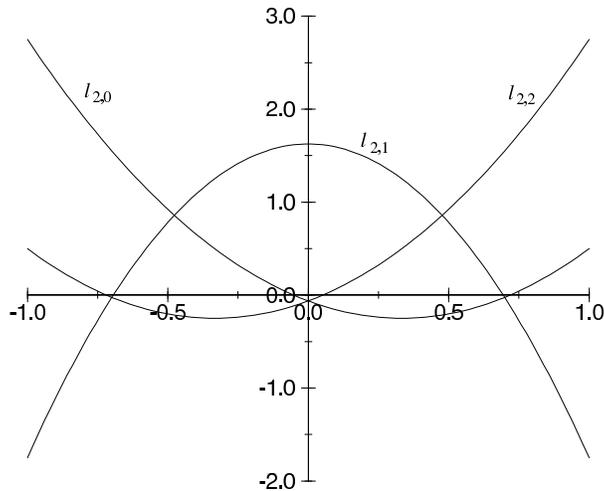}
\caption{The three basis functions for $n=2$.}
\label{l2}
\end{figure}

\newcounter{c1}
\begin{lemma}
The basis functions $\ell_{n,k}(t)$ have the following properties:
\begin{list}{\rm(\roman{c1})}{\usecounter{c1}}
\item
$\ell_{n,n-k}(t)=\ell_{n,k}(-t)$;
\item
$\displaystyle\int_{-1}^1\ell_{n,k}(t)~dt=1$;
\item
$\displaystyle\sum_{k=0}^n\ell_{n,k}(t)=\frac{n+1}{2}$;
\item
The function $\ell_{n,k}(t)$ has $n$ simple roots with one root in
each of the intervals $(t_j,t_{j+1})$ for $j\neq k$.
\end{list}
\end{lemma}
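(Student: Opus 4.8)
The plan is to read each property off the defining conditions \Ref{lconds} together with the uniqueness of the measure-preserving polynomial established in the previous lemma, leaving the root count~(iv) for last as it is the only part needing more than a direct verification. For the symmetry~(i) I would show that the reflected function $\ell_{n,k}(-t)$ satisfies exactly the conditions that define $\ell_{n,n-k}(t)$. The sole ingredient is that the nodes are symmetric about the origin, $t_{n+1-j}=-t_j$, which is immediate from \Ref{tj}. Substituting $s=-t$ converts $\int_{t_j}^{t_{j+1}}\ell_{n,k}(-t)\,dt$ into $\int_{t_{n-j}}^{t_{n+1-j}}\ell_{n,k}(s)\,ds$, and by \Ref{lconds} this equals $1$ precisely when $n-j=k$, i.e.\ $j=n-k$, and $0$ otherwise. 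Hence $\ell_{n,k}(-t)$ obeys \Ref{lconds} with $k$ replaced by $n-k$, and uniqueness forces $\ell_{n,k}(-t)=\ell_{n,n-k}(t)$.

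Properties~(ii) and~(iii) then follow quickly. Summing \Ref{lconds} over $j=0,\dots,n$ collapses the full integral $\int_{-1}^1\ell_{n,k}\,dt$ to the single nonzero contribution at $j=k$, which gives~(ii). For~(iii) I would set $S(t)=\sum_{k=0}^n\ell_{n,k}(t)\in\Pi_n$ and observe that $\int_{t_j}^{t_{j+1}}S(t)\,dt=1$ for every $j$, since exactly one summand contributes. As each subinterval has length $2/(n+1)$, the constant $\tfrac{n+1}{2}$ satisfies the identical conditions; both $S$ and this constant lie in $\Pi_n$ and meet the measure-preserving conditions with all masses equal to one, so uniqueness identifies them.

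The substance is in~(iv). For each $j\neq k$ the vanishing integral $\int_{t_j}^{t_{j+1}}\ell_{n,k}\,dt=0$ over a nondegenerate interval forces the continuous function $\ell_{n,k}$ to change sign in $(t_j,t_{j+1})$---a continuous function with vanishing integral over a nondegenerate interval either vanishes identically there, which a nonzero polynomial cannot, or takes both signs---so the intermediate value theorem yields a root in the open interval, exactly as in the uniqueness proof above. Since \Ref{lconds} prescribes a vanishing integral for each of the $n$ values $j\neq k$, this produces $n$ roots lying in the $n$ pairwise disjoint intervals $(t_j,t_{j+1})$, $j\neq k$, which are therefore distinct. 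Because $\int_{t_k}^{t_{k+1}}\ell_{n,k}\,dt=1\neq0$, the function is not identically zero, and a nonzero element of $\Pi_n$ has at most $n$ zeros counted with multiplicity; comparing the two bounds forces exactly $n$ zeros, all simple, one per interval $(t_j,t_{j+1})$ with $j\neq k$, and as a by-product shows that $\ell_{n,k}$ has exact degree~$n$. The only point to watch is that no extra roots lurk at the nodes or outside $(-1,1)$, but the degree bound rules this out once the $n$ interior roots are in hand.
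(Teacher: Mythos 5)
Your proof is correct and follows essentially the same route as the paper: (i) and (ii) read off the defining conditions \Ref{lconds}, (iii) identifies the sum with the constant via the measure-preserving conditions for constant masses (the paper phrases this as substituting a constant density $D(t)=c$ into \Ref{poly}, which is the same argument), and (iv) combines the sign-change/mean-value observation on each interval with the degree bound. Your write-up merely supplies more detail, e.g.\ the explicit uniqueness argument for (i) and the simplicity of the roots in (iv).
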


\begin{proof}
Results (i) and (ii) follow directly from the definition of the basis
functions. For the third result, consider the case that $D(t)=c$ for
some constant $c$. Then $M_i=2c/(n+1)$, $i=0,\ldots,n$ and
$p_n(t)=D(t)=c$. Substituting
these into \Ref{poly} gives the required result. For the fourth result,
note that the condition that the integral of $\ell_{n,k}(t)$ is zero
over the subinterval $[t_j,t_{j+1}]$
implies that there must be at least one point in $(t_j,t_{j+1})$ at which
the function is zero by the first Mean Value Theorem for Integrals.
Since one root in each interval gives us the
maximum possible number of roots, then there must be precisely one root in
each interval.
\end{proof}

\subsection{Error analysis}

Having determined methods for finding a polynomial approximation
$p_n(t)$ to the density function $D(t)$, we must now consider the
errors in this approximation. Thus, we write
\begin{equation}\label{error}
D(t)=p_n(t)+e_n(t),
\end{equation}
for some error function $e_n(t)$.

Using the first approach of interpolating the measure, the error in 
the interpolating polynomial $q_{n+1}(t)$ is given by
$$\nu(t)=q_{n+1}(t)+\varepsilon_{n+1}(t),$$
where
\begin{equation}\label{epserror}
\varepsilon_{n+1}(t)=\frac{\nu^{(n+2)}(\kappa(t))}{(n+2)!}\prod_{j=0}^{n+1}
(t-t_j),
\end{equation}
for some $\kappa(t)\in(-1,1)$, assuming that $\nu(t)\in C^{n+2}[-1,1]$
(see \cite{BF10}). However, to obtain the error in the
density requires differentiating $\varepsilon_{n+1}(t)$ which does not
lead to a nice form for $e_n(t)$. Thus, we also derive the error based on
the alternative approach of determining the polynomial in terms of the
basis functions $\ell_{n,k}(t)$.

Integrating \Ref{error} over one subinterval and using \Ref{condition}
gives that
\begin{equation}\label{inten}
\int_{t_j}^{t_{j+1}}e_n(t)~dt=0,\quad j=0,1,\ldots,n.
\end{equation}
By the first Mean Value Theorem for Integrals, this implies that there 
is at least one point $\tau_j\in(t_j,t_{j+1})$ such that 
$e_n(\tau_j)=0$, $j=0,1,\ldots,n$ and so there are at least $n+1$ distinct
points in the interval $(-1,1)$ at
which the polynomial interpolates the density. If we choose precisely one
such point in each subinterval, then this interpolation problem defines a
unique polynomial in $\Pi_n$ which must be the polynomial
$p_n(t)$. Thus, regarding the problem as an interpolation problem,
we can express the error function as
\begin{equation}\label{ent}
e_n(t)=\frac{D^{(n+1)}(\xi(t))}{(n+1)!}\prod_{j=0}^n(t-\tau_j),
\end{equation}
where $\xi(t)\in(-1,1)$ assuming that $D(t)\in C^{n+1}[-1,1]$ (see
\cite{BF10}). Note that the points $\tau_j$ depend on the particular
function $D(t)$.

Since there is a simple relationship \Ref{cumden} between $\nu(t)$ and
$D(t)$, there must also be a similar relationship between the errors
which is given by
\begin{equation}\label{errors}
\varepsilon_{n+1}(t)=\int_{-1}^te_n(\tau)d\tau.
\end{equation}

\section{Integration}\label{int-sec}

There are two different types of integral that will be needed later
which involve the density and so we consider them both at this stage,
using the polynomial approximation to the density, and also provide an error
analysis.

\subsection{Case I}

When computing the density using the Frobenius-Perron equation,
integrals of the form
$$\int_{x_\ell}^{x_r}d(x)~dx=\int_{t_\ell}^{t_r}D(t)~dt$$
will be required, where $t_\ell=t(x_\ell)$, $t_r=t(x_r)$ and 
$[t_\ell,t_r]\subseteq[-1,1]$. We assume in this section that the measures
$m_i$ on the intervals $I_i$, $i=1,\ldots,N$ are known exactly and we
consider only the errors associated with the piecewise polynomial
approximation to the measure. Using the polynomial approximation to
the density then gives the following result.

\begin{theorem}\label{case1thm}
If $D(t)\in C^{n+1}[-1,1]$, then
\begin{equation}\label{res1}
\int_{t_\ell}^{t_r}D(t)~dt=\int_{t_\ell}^{t_r}p_n(t)~dt
+O(h^{n+2}),
\end{equation}
\begin{equation}\label{res2}
\int_{t_\ell}^{t_r}|D(t)-p_n(t)|~dt=O(h^{n+2}).
\end{equation}
\end{theorem}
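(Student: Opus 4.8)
The plan is to read off both estimates from the pointwise error representation \Ref{ent}, which expresses $D(t)-p_n(t)=e_n(t)$ as the product of $D^{(n+1)}(\xi(t))/(n+1)!$ with the node polynomial $\prod_{j=0}^n(t-\tau_j)$. The whole theorem follows once I establish the \emph{uniform} bound $|e_n(t)|=O(h^{n+2})$ for $t\in[-1,1]$: estimate \Ref{res2} is then immediate by integrating over $[t_\ell,t_r]\subseteq[-1,1]$, an interval of length at most $2$, and \Ref{res1} follows from \Ref{res2} since the signed integral of $e_n$ is bounded in absolute value by the integral of $|e_n|$.

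There are two ingredients in the uniform bound, and they behave very differently in $h$. First, the node polynomial is harmless: the interpolation points satisfy $\tau_j\in(t_j,t_{j+1})\subset(-1,1)$, and the nodes $t_j$ are fixed points of $[-1,1]$ independent of $h$, so each factor obeys $|t-\tau_j|\le 2$ and the product is bounded by $2^{n+1}$, a constant depending only on $n$. The real content is the size of $D^{(n+1)}$. Here one must not treat $D$ as an $h$-independent function: although the hypothesis is phrased as $D\in C^{n+1}[-1,1]$, the definition \Ref{Dt} couples $D$ to $h$ through the affine change of variables. Differentiating \Ref{Dt} and using that $x(t)$ from \Ref{xoft} is affine with $dx/dt=\tfrac12(n+1)h$, I expect to obtain
$$
D^{(n+1)}(t)=\left(\tfrac12(n+1)h\right)^{n+2}d^{(n+1)}\bigl(x(t)\bigr),
$$
that is, one factor of $h$ from the prefactor in \Ref{Dt} and one from each of the $n+1$ applications of the chain rule. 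Since $d$ is $C^{n+1}$ on the compact interval (equivalently $D\in C^{n+1}[-1,1]$), its $(n+1)$st derivative is bounded there by $\|d^{(n+1)}\|_\infty<\infty$, a quantity independent of $h$; hence $|D^{(n+1)}(\xi(t))|\le C\,h^{n+2}$ uniformly in $t$.

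Combining the two ingredients gives $|e_n(t)|\le \frac{2^{n+1}}{(n+1)!}\|d^{(n+1)}\|_\infty\left(\tfrac12(n+1)\right)^{n+2}h^{n+2}$ for all $t\in[-1,1]$, and integrating this over $[t_\ell,t_r]$ yields both \Ref{res1} and \Ref{res2}. I expect the main obstacle to be bookkeeping rather than conceptual: correctly tracking the $h$-dependence that is hidden inside $D$ and its derivatives, so that the exponent $n+2$ emerges as $n+1$ powers of $h$ from the chain rule together with the single extra power from the prefactor in \Ref{Dt}, rather than from the node spacing, which in the $t$-variable is $O(1)$ and contributes nothing. One could alternatively exploit the measure-preserving property \Ref{condition}, which forces $\int_{t_j}^{t_{j+1}}e_n\,dt=0$ on every full subinterval and so makes the signed integral in \Ref{res1} collect contributions only from the two partial subintervals at the ends of $[t_\ell,t_r]$; this sharpens the constant but does not change the order, so the direct uniform estimate suffices.
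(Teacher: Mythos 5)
Your proposal is correct and follows essentially the same route as the paper: both rest on the interpolation error formula \Ref{ent} together with the observation that the change of variables \Ref{Dt} forces $D^{(n+1)}(t)=\bigl(\tfrac12(n+1)h\bigr)^{n+2}d^{(n+1)}(x(t))=O(h^{n+2})$, after which both integrals are bounded trivially. Your write-up merely makes explicit the constants (the $2^{n+1}$ bound on the node polynomial and $\|d^{(n+1)}\|_\infty$) that the paper leaves implicit.
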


\begin{proof}
Integrating the
polynomial approximation with error given by \Ref{error}, we obtain
$$\int_{t_\ell}^{t_r}D(t)~dt=\int_{t_\ell}^{t_r}p_n(t)~dt+
\int_{t_\ell}^{t_r}e_n(t)~dt.$$
Thus, for \Ref{res1}, we need to estimate
\begin{equation}\label{int1}
\int_{t_\ell}^{t_r}e_n(t)~dt.
\end{equation}
Similarly, for \Ref{res2}, we need to estimate
\begin{equation}\label{int2}
\int_{t_\ell}^{t_r}|e_n(t)|~dt.
\end{equation}
From \Ref{ent}, we see that the function $e_n(t)$ involves $D^{(n+1)}(\xi(t))$.
Now from \Ref{Dt}, we know that
$$D(t)=chd(x(t)),$$
where $c=\frac{1}{2}(n+1)$. Since $dx/dt=ch$, we therefore note
that
$$D^{(n+1)}(t)=(ch)^{n+2}d^{(n+1)}(x(t)),$$
and so
$$D^{(n+1)}(\xi(t))=O(h^{n+2}).$$
Thus, the integrals \Ref{int1} and \Ref{int2} are both $O(h^{n+2})$ as
required.
\end{proof}

Using \Ref{res1}, we now obtain
\begin{eqnarray*}
\int_{x_\ell}^{x_r}d(x)~dx&=&\int_{t_\ell}^{t_r}D(t)~dt\\
&=&\int_{t_\ell}^{t_r}p_n(t)~dt+O(h^{n+2})\\
&=&\int_{t_\ell}^{t_r}\sum_{k=0}^nM_k\ell_{n,k}(t)~dt+O(h^{n+2})\\
&=&\sum_{k=0}^nm_{i+k+1}\int_{t_\ell}^{t_r}\ell_{n,k}(t)~dt+O(h^{n+2}),
\end{eqnarray*}
where we used \Ref{Mj} in the final step.
Clearly, in practice we simply drop the error term when computing this
integral.

\subsection{Case II}\label{case2}

The second type of integral that we will need to evaluate is given by
\Ref{newint}. As in the previous case, we again assume that the measures 
$m_i$ on the intervals $I_i$, $i=1,\ldots,N$ are known exactly.
Then we have that
\begin{equation}\label{integral2}
\int_{-1}^1F(t)D(t)~dt=\int_{-1}^1F(t)p_n(t)~dt+\int_{-1}^1F(t)e_n(t)~dt.
\end{equation}
We first consider the error term in \Ref{integral2}. For later use, we
define the function
$$\phi_{n+2}(t)=\prod_{j=0}^{n+1}(t-t_j),$$
where the evenly spaced points $t_j$ are given by \Ref{tj}.

\begin{theorem}\label{int2-thm}
If $n$ is odd, $F\in C^2[-1,1]$ and $D\in C^{n+2}[-1,1]$
then the error term in \Ref{integral2} is
\begin{equation}\label{nodd}
\int_{-1}^1F(t)e_n(t)~dt=-\left[\frac{F''(\alpha)D^{(n+1)}(\delta)}{(n+2)!}
+\frac{F'(\alpha)D^{(n+2)}(\gamma)}{(n+3)!}\right]
\int_{-1}^1 t\phi_{n+2}(t)~dt,
\end{equation}
where $\alpha,\delta,\gamma\in(-1,1)$.

If $n$ is even, $F\in C^1[-1,1]$ and $D\in C^{n+1}[-1,1]$ then the error
term in \Ref{integral2} is
\begin{eqnarray}
\int_{-1}^1 F(t)e_n(t)~dt&=&-
\left[\frac{F''(\alpha)D^{(n+1)}(\delta)}{(n+2)!}
+\frac{F'(\alpha)D^{(n+2)}(\gamma)}{(n+3)!}\right]\times\nonumber\\
&&(\alpha-1)\int_{-1}^{t_n}\phi_{n+2}(t)~dt\nonumber\\
&&-\frac{F'(\xi)D^{(n+1)}(\eta)}{(n+2)!}
\int_{-1}^{1}\phi_{n+2}(t)~dt,\label{neven}
\end{eqnarray}
where $\alpha,\delta,\gamma,\xi,\eta\in(-1,1)$.
\end{theorem}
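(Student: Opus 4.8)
The plan is to reduce the weighted error integral to a form with a sign-controlled kernel and then read off the stated mean-value expressions. The first step is a single integration by parts. Since $t_0=-1$ and $t_{n+1}=1$, the zero-integral conditions \Ref{inten} show that the antiderivative $\varepsilon_{n+1}(t)=\int_{-1}^te_n(\tau)\,d\tau$ of \Ref{errors} vanishes at both endpoints, $\varepsilon_{n+1}(\pm1)=0$, so the boundary terms drop and
\[
\int_{-1}^1F(t)e_n(t)\,dt=-\int_{-1}^1F'(t)\,\varepsilon_{n+1}(t)\,dt.
\]
I would then rewrite \Ref{epserror} in divided-difference form, $\varepsilon_{n+1}(t)=g(t)\,\phi_{n+2}(t)$ with $g(t)=\nu[t_0,\dots,t_{n+1},t]$; this $g$ is a genuinely smooth function of $t$, avoiding the implicit argument $\kappa(t)$, with $g(t)=D^{(n+1)}(\kappa(t))/(n+2)!$ in value (using $\nu^{(n+2)}=D^{(n+1)}$) and $g'(t)=\nu[t_0,\dots,t_{n+1},t,t]=D^{(n+2)}(\gamma)/(n+3)!$. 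The decisive structural fact is parity: since the nodes are symmetric about the origin, $\phi_{n+2}(-t)=(-1)^n\phi_{n+2}(t)$, so $\phi_{n+2}$ is odd for $n$ odd and even for $n$ even, and this dictates which moment of $\phi_{n+2}$ is the first to survive.

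For $n$ odd I would integrate by parts a second time against $\Phi(t)=\int_{-1}^t\phi_{n+2}(s)\,ds$. Oddness of $\phi_{n+2}$ forces $\Phi(-1)=\Phi(1)=0$, the boundary terms again vanish, and
\[
\int_{-1}^1F(t)e_n(t)\,dt=\int_{-1}^1\bigl(F''(t)g(t)+F'(t)g'(t)\bigr)\Phi(t)\,dt.
\]
The key lemma is that $\Phi$ is sign-definite on $(-1,1)$, vanishing only at $\pm1$ (for $n=1$ one computes $\Phi(t)=\tfrac14(t^2-1)^2$). This is exactly what licenses the weighted mean value theorem for integrals: the continuous factor $F''g+F'g'$ is pulled out at a single $\alpha\in(-1,1)$. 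Recording $g(\alpha)=D^{(n+1)}(\delta)/(n+2)!$ and $g'(\alpha)=D^{(n+2)}(\gamma)/(n+3)!$, and converting the moment by one more integration by parts, $\int_{-1}^1\Phi\,dt=-\int_{-1}^1 t\,\phi_{n+2}(t)\,dt$, reproduces \Ref{nodd}. Note that this branch consumes the two extra derivatives ($F\in C^2$, $D\in C^{n+2}$) precisely in forming $F''$ and $g'$.

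For $n$ even the symmetry is lost: $\phi_{n+2}$ is even, so $\Phi(1)=\int_{-1}^1\phi_{n+2}\neq0$, and the second integration by parts is both unavailable (only $F\in C^1$, $D\in C^{n+1}$ are assumed) and obstructed by a nonzero endpoint term. Here I would instead split at the last interior node, $\int_{-1}^1=\int_{-1}^{t_n}+\int_{t_n}^1$. On the final subinterval $[t_n,1]$ the kernel has no interior sign change, so a direct weighted mean value theorem yields the dominant term $-\,F'(\xi)D^{(n+1)}(\eta)/(n+2)!\cdot\int_{-1}^1\phi_{n+2}$ of \Ref{neven}, while the remaining piece on $[-1,t_n]$, handled by the same antiderivative-and-Taylor device, produces the higher-order correction term carrying $(\alpha-1)\int_{-1}^{t_n}\phi_{n+2}$.

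The main obstacle throughout is uniform: $\phi_{n+2}$, and hence $\varepsilon_{n+1}$, changes sign on $(-1,1)$, so the mean value theorem cannot be applied to the raw integral \Ref{integral2}. Everything hinges on replacing the oscillatory kernel by its antiderivative $\Phi$ and on proving that $\Phi$ (with the appropriate base point) is sign-definite on the interval in question; verifying this sign-definiteness for general $n$ is the technical heart. In the even case the sign-definiteness simply fails across the whole interval, which is exactly why the domain must be split at $t_n$ and why the second, higher-order term appears in \Ref{neven}.
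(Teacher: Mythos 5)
Your treatment of the odd case is correct and essentially the paper's argument: the same first integration by parts with vanishing boundary terms, the same sign-definite antiderivative $\psi_{n+3}(t)=\int_{-1}^t\phi_{n+2}(\tau)\,d\tau$ for the second integration by parts, the same weighted mean value theorem, and the same conversion $\int_{-1}^1\psi_{n+3}(t)\,dt=-\int_{-1}^1 t\,\phi_{n+2}(t)\,dt$. Your divided-difference coefficient $g(t)=\nu[t_0,\dots,t_{n+1},t]$ is a clean substitute for the paper's appeal to Ralston's lemma for differentiating $\nu^{(n+2)}(\kappa(t))$, and it delivers the same two mean-value constants, so that part is a genuine (minor) improvement in presentation.

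The even case, however, has a genuine gap in the bookkeeping. The weighted mean value theorem applied on $[t_n,1]$ alone can only produce the partial moment $\int_{t_n}^1\phi_{n+2}(t)\,dt$, not the full integral $\int_{-1}^1\phi_{n+2}(t)\,dt$ that appears in the dominant term of \Ref{neven}. Moreover, when you run the odd-case device on $[-1,t_n]$ --- writing $\phi_{n+2}(t)=(t-1)\phi_{n+1}(t)$ with $\phi_{n+1}$ odd about the midpoint of $[-1,t_n]$ --- the derivative of the product $F'(t)g(t)(t-1)$ produces not only the $(\alpha-1)$-weighted higher-order term but also a second, \emph{leading-order} term $F'(\alpha)g(\alpha)\int_{-1}^{t_n}\phi_{n+2}(t)\,dt$ arising from differentiating the factor $(t-1)$; your sketch drops it. The paper's proof keeps this term and merges it with the $[t_n,1]$ contribution $F'(\theta)g(\theta)\int_{t_n}^1\phi_{n+2}(t)\,dt$ into the single term $F'(\xi)g(\xi)\int_{-1}^1\phi_{n+2}(t)\,dt$, and this merge is only legitimate because $\int_{-1}^{t_n}\phi_{n+2}(t)\,dt$ and $\int_{t_n}^1\phi_{n+2}(t)\,dt$ have the same sign (a fact the paper cites from Steffensen), so that an intermediate point $\xi$ between $\alpha$ and $\theta$ exists. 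Without the dropped term and this same-sign lemma, your argument yields $\int_{t_n}^1\phi_{n+2}$ where \Ref{neven} has $\int_{-1}^1\phi_{n+2}$ and omits a contribution of the same order, so it does not reproduce the stated formula.
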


\begin{proof}
We analyse the error by integrating by parts and using the error in the 
approximation to the measure $\nu(t)$ given by \Ref{epserror}. Integrating by
parts and using \Ref{errors} gives
$$\int_{-1}^1F(t)e_n(t)~dt=[F(t)\varepsilon_{n+1}(t)]_{-1}^1
-\int_{-1}^1F'(t)\varepsilon_{n+1}(t)~dt.$$
The boundary terms vanish since $\varepsilon_{n+1}(-1)=0$ by
\Ref{errors} and $\varepsilon_{n+1}(1)=0$ by \Ref{errors} and \Ref{inten}.
Thus,
\begin{equation}\label{first-step}
\int_{-1}^1F(t)e_n(t)~dt=-\frac{1}{(n+2)!}\int_{-1}^1F'(t)
\nu^{(n+2)}(\kappa(t))\phi_{n+2}(t)~dt.
\end{equation}
This error term is very similar to that for simple Newton-Cotes integration
formulae \cite{RR78} since the $t_j$'s are evenly spaced points, except for
the extra term $F'(t)$ which complicates the analysis.

We first consider the case when $n$ is odd. In this case, $\phi_{n+2}(t)$ 
is an odd function since the points $t_j$ are evenly spaced, and
this results in an error which is of higher order than would be
expected, as in the case of Newton-Cotes formulae. 

We define
$$\psi_{n+3}(t)=\int_{-1}^t\phi_{n+2}(\tau)~d\tau.$$
Since $\phi_{n+2}(t)$ is an odd function, it follows that $\psi_{n+3}(t)$ is
of one sign \cite{RR78}, and that
\begin{equation}\label{psi}
\psi_{n+3}(1)=0.
\end{equation}
Integrating \Ref{first-step} by parts, the boundary terms again disappear 
using \Ref{psi} and so
$$\int_{-1}^1F'(t)\nu^{(n+2)}(\kappa(t))\phi_{n+2}(t)~dt
=-\int_{-1}^1\frac{d}{dt}\left(
F'(t)\nu^{(n+2)}(\kappa(t))\right)\psi_{n+3}(t)~dt.$$
Since $\psi_{n+3}(t)$ is of one sign, the Mean Value Theorem for
Integrals can be used, giving
\begin{eqnarray}
\int_{-1}^1F'(t)\nu^{(n+2)}(\kappa(t))\phi_{n+2}(t)~dt
&=&-\frac{d}{dt}\left.\left(F'(t)\nu^{(n+2)}
(\kappa(t))\right)\right|_{t=\alpha}\int_{-1}^1\psi_{n+3}(t)~dt\nonumber\\
&=&\frac{d}{dt}\left.\left(F'(t)\nu^{(n+2)}(\kappa(t))\right)
\right|_{t=\alpha} \int_{-1}^1 t\phi_{n+2}(t)~dt,\label{noddresult}
\end{eqnarray}
where $\alpha\in(-1,1)$ and the second step involved another integration by 
parts.

It can be shown \cite{R63} that
\begin{equation}\label{int-result}
\frac{d}{dt}\nu^{(n+2)}(\kappa(t))=\frac{\nu^{(n+3)}(\beta(t))}{n+3},
\end{equation}
for some $\beta(t)\in(-1,1)$. Thus, substituting \Ref{noddresult} into 
\Ref{first-step} and using \Ref{int-result}, we obtain
$$\int_{-1}^1F(t)e_n(t)~dt=-\left[\frac{F''(\alpha)\nu^{(n+2)}(\delta)}{(n+2)!}
+\frac{F'(\alpha)\nu^{(n+3)}(\gamma)}{(n+3)!}\right]
\int_{-1}^1 t\phi_{n+2}(t)~dt,$$
where $\delta=\kappa(\alpha)$ and $\gamma=\beta(\alpha)$. Now
$\nu'(t)=D(t)$ and so substituting for $\nu$ gives \Ref{nodd} as claimed.

When $n$ is even, the proof is more complicated, as it is for the standard
Newton-Cotes result.
We first break the integral in \Ref{first-step} into two, giving
\begin{eqnarray}
\int_{-1}^1F'(t)\nu^{(n+2)}(\kappa(t))\phi_{n+2}(t)~dt&=&
\int_{-1}^{t_n}F'(t)\nu^{(n+2)}(\kappa(t))\phi_{n+2}(t)~dt\nonumber\\
&&+\int_{t_n}^1F'(t)\nu^{(n+2)}(\kappa(t))\phi_{n+2}(t)~dt.\label{two-ints}
\end{eqnarray}
For the second integral, $\phi_{n+2}(t)$ is of one sign on the interval
$[t_n,1]$ and so the Mean Value Theorem for Integrals gives that
\begin{equation}\label{second-int}
\int_{t_n}^1F'(t)\nu^{(n+2)}(\kappa(t))\phi_{n+2}(t)~dt=
F'(\theta)\nu^{(n+2)}(\kappa(\theta))\int_{t_n}^1\phi_{n+2}(t)~dt,
\end{equation}
for some $\theta\in(t_n,1)$.

We rewrite the first integral in \Ref{two-ints} as
$$\int_{-1}^{t_n}F'(t)\nu^{(n+2)}(\kappa(t))\phi_{n+2}(t)~dt
=\int_{-1}^{t_n}F'(t)\nu^{(n+2)}(\kappa(t))(t-1)\phi_{n+1}(t)~dt,$$
and we note that $\phi_{n+1}(t)$ is an odd function about the midpoint of
the interval $[-1,t_n]$. Thus, using the result \Ref{noddresult} for $n$ 
odd then gives
\begin{eqnarray}
\int_{-1}^{t_n}F'(t)\nu^{(n+2)}(\kappa(t))\phi_{n+2}(t)~dt
&=&\frac{d}{dt}\left.\left\{F'(t)\nu^{(n+2)}(\kappa(t))(t-1)\right\}
\right|_{t=\alpha}\times\nonumber\\
&&\int_{-1}^{t_n}t\phi_{n+1}(t)~dt\nonumber\\
&=&\Bigg[\frac{d}{dt}\left.\left\{F'(t)\nu^{(n+2)}(\kappa(t))\right\}
\right|_{t=\alpha}(\alpha-1) \nonumber\\
&&~~+F'(\alpha)\nu^{(n+2)}(\kappa(\alpha))
\Bigg]\int_{-1}^{t_n}t\phi_{n+1}(t)~dt,\label{first-int}
\end{eqnarray}
where $\alpha\in(-1,t_n)$. Now since $\phi_{n+1}(t)$ is odd about the
midpoint of the interval,
\begin{equation}\label{phiodd}
\int_{-1}^{t_n}t\phi_{n+1}(t)~dt=\int_{-1}^{t_n}(t-1)\phi_{n+1}(t)~dt
=\int_{-1}^{t_n}\phi_{n+2}(t)~dt.
\end{equation}
Combining the integrals \Ref{second-int} and \Ref{first-int}, 
and using \Ref{phiodd}, we therefore obtain
\begin{eqnarray*}
\int_{-1}^1F'(t)\nu^{(n+2)}(\kappa(t))\phi_{n+2}(t)~dt&=&
\Bigg[\frac{d}{dt}\left.\left\{F'(t)\nu^{(n+2)}(\kappa(t))\right\}
\right|_{t=\alpha}(\alpha-1)\\
&&~~+F'(\alpha)\nu^{(n+2)}(\kappa(\alpha))
\Bigg]\int_{-1}^{t_n}\phi_{n+2}(t)~dt\\
&&+F'(\theta)\nu^{(n+2)}(\kappa(\theta))\int_{t_n}^1\phi_{n+2}(t)~dt.
\end{eqnarray*}
Now the two integrals have the same sign \cite{S50} and so
\begin{eqnarray*}
\int_{-1}^1F'(t)\nu^{(n+2)}(\kappa(t))\phi_{n+2}(t)~dt&=&
\frac{d}{dt}\left.\left\{F'(t)\nu^{(n+2)}(\kappa(t))\right\}
\right|_{t=\alpha}\times\\
&&(\alpha-1)\int_{-1}^{t_n}\phi_{n+2}(t)~dt\\
&&+F'(\xi)\nu^{(n+2)}(\kappa(\xi))\times\\
&&\left[\int_{-1}^{t_n}\phi_{n+2}(t)~dt
+\int_{t_n}^1\phi_{n+2}(t)~dt\right]\\
&=&\frac{d}{dt}\left.\left\{F'(t)\nu^{(n+2)}(\kappa(t))\right\}
\right|_{t=\alpha}\times\\
&&(\alpha-1)\int_{-1}^{t_n}\phi_{n+2}(t)~dt\\
&&+F'(\xi)\nu^{(n+2)}(\kappa(\xi))\int_{-1}^{1}\phi_{n+2}(t)~dt,
\end{eqnarray*}
for some $\xi\in(-1,1)$ (and satisfying $\alpha\leq\xi\leq\theta$). 
Expanding the derivative term and using \Ref{int-result} gives
\begin{eqnarray*}
\int_{-1}^1F'(t)\nu^{(n+2)}(\kappa(t))\phi_{n+2}(t)~dt&=&
\left[F''(\alpha)\nu^{(n+2)}(\kappa(\alpha))
+\frac{F'(\alpha)\nu^{(n+3)}(\beta(\alpha))}{n+3}\right]\\
&&\times(\alpha-1)\int_{-1}^{t_n}\phi_{n+2}(t)~dt\\ 
&&+F'(\xi)\nu^{(n+2)}(\kappa(\xi))\int_{-1}^{1}\phi_{n+2}(t)~dt,
\end{eqnarray*}
where $\beta(\alpha)\in(-1,1)$.
Substituting back into \Ref{first-step} then gives
\begin{eqnarray*}
\int_{-1}^1 F(t)e_n(t)~dt&=&-
\left[\frac{F''(\alpha)\nu^{(n+2)}(\kappa(\alpha))}{(n+2)!}
+\frac{F'(\alpha)\nu^{(n+3)}(\beta(\alpha))}{(n+3)!}\right]\times\\
&&(\alpha-1)\int_{-1}^{t_n}\phi_{n+2}(t)~dt\\
&&-\frac{F'(\xi)\nu^{(n+2)}(\kappa(\xi))}{(n+2)!}
\int_{-1}^{1}\phi_{n+2}(t)~dt,
\end{eqnarray*}
Finally, substituting for $\nu$ using $\nu'(t)=D(t)$ gives the stated
result, with $\delta=\kappa(\alpha)$, $\gamma=\beta(\alpha)$ and
$\eta=\kappa(\xi)$.
\end{proof}

These results can be extended to the case of evaluating an integral over
the whole interval $I$.

\begin{theorem}\label{thm2}
Let $p_N^n(x)$ be the discontinuous piecewise polynomial approximation to the
density, assuming that the measures $m_i$ on the intervals $I_i$,
$i=1,\ldots,N$ are known exactly.

If $n$ is odd, $f\in C^2(I)$, $d\in C^{n+2}(I)$ and $f(x)$ is not
a constant, then 
\begin{equation}\label{err1}
\int_I f(x)d(x)~dx=\int_I f(x)p_N^n(x)~dx+O(h^{n+3}).
\end{equation}

If $n$ is even, $f\in C^1(I)$, $d\in C^{n+1}(I)$ and $f(x)$ is not a
constant, then 
\begin{equation}\label{err2}
\int_I f(x)d(x)~dx=\int_I f(x)p_N^n(x)~dx+O(h^{n+2}).
\end{equation}
\end{theorem}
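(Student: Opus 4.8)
The plan is to reduce the global estimate to the single-block estimate of Theorem \ref{int2-thm} and then sum over the blocks. First I would regard $p_N^n$ as being assembled from the local measure-preserving polynomials: the $N$ subintervals are grouped into $K=(x_N-x_0)/((n+1)h)$ consecutive blocks of $n+1$ subintervals each, and on the $k$-th block $p_N^n$ equals the polynomial $p_n$ of \Ref{poly} for that block (this is exactly what produces a discontinuous piecewise polynomial over $I$). Since $d$ and its approximation are both integrated against $f$ block by block, the change of variables \Ref{newint}--\Ref{Dt} turns the global error into a sum of local errors,
\[
\int_I f(x)\,d(x)\,dx-\int_I f(x)\,p_N^n(x)\,dx=\sum_{k=1}^{K}\int_{-1}^1 F_k(t)\,e_n(t)\,dt,
\]
where $F_k(t)=f(x(t))$ on the $k$-th block, so each summand is precisely the quantity estimated in Theorem \ref{int2-thm}.

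Next I would extract the $h$-dependence of the local error formulas \Ref{nodd} and \Ref{neven}. Because $dx/dt=ch$ with $c=\tfrac12(n+1)$, the chain rule gives $F'=O(h)$ and $F''=O(h^2)$, while the relation $D^{(m)}(t)=(ch)^{m+1}d^{(m)}(x(t))$ already used in Theorem \ref{case1thm} gives $D^{(n+1)}=O(h^{n+2})$ and $D^{(n+2)}=O(h^{n+3})$. The points $t_j$ are fixed independently of $h$, so $\phi_{n+2}$ is a fixed polynomial and the constants $\int_{-1}^1 t\phi_{n+2}$, $\int_{-1}^{t_n}\phi_{n+2}$ and $\int_{-1}^1\phi_{n+2}$ are all $O(1)$. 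For $n$ odd, both terms in \Ref{nodd} scale like $F''D^{(n+1)}\sim F'D^{(n+2)}\sim h^{n+4}$, so each block contributes $O(h^{n+4})$. For $n$ even, the bracketed term in \Ref{neven} is again $O(h^{n+4})$, but the surviving term $F'(\xi)D^{(n+1)}(\eta)\int_{-1}^1\phi_{n+2}$ is only $O(h)\cdot O(h^{n+2})=O(h^{n+3})$ and therefore dominates, so each block contributes $O(h^{n+3})$.

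Finally I would sum over the $K=O(1/h)$ blocks. Using the triangle inequality together with uniform bounds for $f'',f'$ and $d^{(n+1)},d^{(n+2)}$, which exist because $I$ is compact and these derivatives are continuous, the per-block bounds give a total error of at most $K\cdot O(h^{n+4})=O(h^{n+3})$ when $n$ is odd and $K\cdot O(h^{n+3})=O(h^{n+2})$ when $n$ is even, which are exactly \Ref{err1} and \Ref{err2}. The one genuinely load-bearing point is this summation: the number of blocks grows like $1/h$, so passing from the local rate to the global rate costs precisely one power of $h$, and this gap is the entire content of the theorem beyond Theorem \ref{int2-thm}. I do not expect inter-block cancellation to improve matters, since the crude sum already matches the stated orders. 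The hypothesis that $f$ is not constant merely keeps us in the regime where the leading terms of Theorem \ref{int2-thm} are present; for constant $f$ the local error vanishes identically by \Ref{inten}, so the estimates hold trivially.
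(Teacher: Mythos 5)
Your proposal is correct and follows essentially the same route as the paper: change variables block by block, read off the per-block order from Theorem \ref{int2-thm} via $F'=O(h)$, $F''=O(h^2)$ and $D^{(m)}=O(h^{m+1})$, identify the dominant $F'(\xi)D^{(n+1)}(\eta)$ term in the even case, and lose one power of $h$ on summing the $O(1/h)$ blocks. The only cosmetic difference is that the paper bounds the factor $\alpha-1$ in \Ref{neven} as $O(1/h)$ to declare both terms $O(h^{n+3})$, whereas you treat it as $O(1)$ and let the second term dominate; both readings give the same per-block order and the same conclusion.
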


\begin{proof}
We first note that if $f(x)$ is a constant, then the error term in both
cases is zero by construction, and hence this case has been excluded.
The results of Theorem \ref{int2-thm} can be converted back to the 
original $x$ coordinates in order to obtain error terms as a power of $h$.
As in the proof of Theorem \ref{case1thm}, we note that $D(t)=chd(x(t))$ 
and that $dx/dt=ch$,
where $c=\frac{1}{2}(n+1)$. Using these results, we obtain, for $n$ odd,
\begin{eqnarray*}
\int_{x_i}^{x_{i+n+1}}f(x)d(x)~dx&=&\int_{-1}^1F(t)D(t)~dt\\
&=&\int_{-1}^1F(t)p_n(t)~dt+\int_{-1}^1F(t)e_n(t)~dt\\
&=&\int_{-1}^1F(t)p_n(t)~dt+O(h^{n+4}).
\end{eqnarray*}
Similarly, for $n$ even, we have
$$\int_{x_i}^{x_{i+n+1}}f(x)d(x)~dx
=\int_{-1}^1F(t)p_n(t)~dt+O(h^{n+3}).$$
We note that in the first term of \Ref{neven}, the term $\alpha-1$ can
be expressed, using \Ref{tofx}, as
$$\alpha-1=-2+\frac{2(\tilde\alpha-x_i)}{(n+1)h}=\frac{2(\tilde\alpha
-x_{i+n+1})}{(n+1)h}=O(1/h),$$
where $\tilde\alpha=x(\alpha)\in(x_i,x_{i+n+1})$, and so both terms in 
\Ref{neven} are $O(h^{n+3})$.

Integrating over the whole interval $I$ requires the sum of $N/(n+1)$ such
integrals. As usual, the error term for such a composite quadrature rule 
is one power of $h$ less than for the simple rule as it involves the sum of
the errors over the $N/(n+1)$ intervals, and $N=O(h^{-1})$. This gives the
stated results.
\end{proof}

\begin{theorem}\label{thm3}
The results of Theorem \ref{thm2} hold if $f\in C^0(I)$ but is piecewise
$C^2$ ($n$ odd) or piecewise $C^1$ ($n$ even).
\end{theorem}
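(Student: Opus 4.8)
The plan is to exploit the fact that the integral over $I$ splits into a sum of $N/(n+1)=O(1/h)$ integrals, each over a block of $n+1$ adjacent subintervals, and that the weakened smoothness of $f$ can spoil the per-block estimate of Theorem \ref{thm2} only on those finitely many blocks that actually contain a point where $f$ fails to be $C^2$ (for $n$ odd) or $C^1$ (for $n$ even). I would call a block \emph{good} if $f$ restricted to it is $C^2$ ($n$ odd) or $C^1$ ($n$ even) up to its endpoints, and \emph{bad} otherwise. Since $f$ has only finitely many breakpoints, and a breakpoint falling on a block boundary leaves both adjacent blocks good (the restriction is still smooth up to that endpoint in the one-sided sense), the number $K$ of bad blocks is bounded independently of $h$.

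On every good block the hypotheses of Theorem \ref{int2-thm} are met, so the per-block error is precisely as in the proof of Theorem \ref{thm2}, namely $O(h^{n+4})$ for $n$ odd and $O(h^{n+3})$ for $n$ even. Summing over the $O(1/h)$ good blocks reproduces the bounds $O(h^{n+3})$ and $O(h^{n+2})$, so the good blocks contribute no more than the stated error.

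The main obstacle, and the only genuinely new step, is the bad-block estimate, where Theorem \ref{int2-thm} is unavailable because $F(t)=f(x(t))$ is no longer $C^2$ or even $C^1$ across the block. Here I would return to the \emph{single} integration by parts that opens the proof of Theorem \ref{int2-thm}, namely $\int_{-1}^1 F(t)e_n(t)\,dt=[F\varepsilon_{n+1}]_{-1}^1-\int_{-1}^1 F'(t)\varepsilon_{n+1}(t)\,dt$, but carried out piecewise: I split the block at its interior breakpoints $-1<s_1<\cdots<s_m<1$ and integrate by parts on each of the resulting subintervals, on which $F$ is $C^1$. The boundary terms at $t=\pm1$ vanish since $\varepsilon_{n+1}(\pm1)=0$, and, crucially, all interior boundary contributions cancel in pairs because $f$ (hence $F$) is continuous while $\varepsilon_{n+1}$, being $\int_{-1}^t e_n$, is continuous as well. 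What survives is $-\int_{-1}^1 F'(t)\varepsilon_{n+1}(t)\,dt$, with $F'$ understood in the piecewise sense.

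It then suffices to bound this crudely. As $f$ is piecewise $C^1$ with finitely many pieces, $f'$ is uniformly bounded, so from $F'(t)=\tfrac12(n+1)h\,f'(x(t))$ we get $F'=O(h)$ uniformly; moreover $\varepsilon_{n+1}(t)=\nu^{(n+2)}(\kappa(t))\phi_{n+2}(t)/(n+2)!$ with $\nu^{(n+2)}=D^{(n+1)}=O(h^{n+2})$ and $\phi_{n+2}=O(1)$, so $\varepsilon_{n+1}=O(h^{n+2})$. Hence each bad-block error is $O(h)\cdot O(h^{n+2})=O(h^{n+3})$, for both parities of $n$, and summing over the at most $K$ bad blocks leaves a contribution of $O(h^{n+3})$. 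Adding the good and bad contributions gives $O(h^{n+3})$ for $n$ odd (both parts being of this order) and $O(h^{n+2})$ for $n$ even (the bad part being of strictly higher order), exactly the bounds of Theorem \ref{thm2}. The point to check with care is that this one piecewise integration by parts requires only the continuity of $f$ (for the cancellation) together with piecewise $C^1$ regularity (to bound $F'$), so the weaker hypotheses of Theorem \ref{thm3} indeed suffice.
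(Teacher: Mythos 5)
Your proposal is correct, and its skeleton coincides with the paper's: both arguments observe that only the finitely many blocks containing a breakpoint of $f'$ need special treatment, and both rely on the fact that the \emph{first} integration by parts in the proof of Theorem \ref{int2-thm} survives because it uses only the continuity of $F$ and the vanishing of $\varepsilon_{n+1}$ at $t=\pm 1$. Where you diverge is in how the bad blocks are finished off. The paper carries out the \emph{second} integration by parts piecewise as well, which produces an explicit non-vanishing interior boundary term $-\tfrac{1}{(n+2)!}\bigl(F_1'(t^*)-F_2'(t^*)\bigr)\nu^{(n+2)}(\kappa(t^*))\psi_{n+3}(t^*)=O(h^{n+3})$ alongside integral terms that are still $O(h^{n+4})$; you instead stop after the first integration by parts and bound $-\int_{-1}^1 F'\varepsilon_{n+1}\,dt$ crudely by $O(h)\cdot O(h^{n+2})=O(h^{n+3})$, using $F'=\tfrac12(n+1)h\,f'(x(t))$ and $\varepsilon_{n+1}=\nu^{(n+2)}(\kappa(t))\phi_{n+2}(t)/(n+2)!=O(h^{n+2})$. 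Since the number of bad blocks is $O(1)$ in $h$, this cruder bound suffices for both parities of $n$ at once (it is of the same order as the good-block total for $n$ odd and of strictly higher order for $n$ even), so you avoid both the second integration by parts and the separate ``similar analysis for $n$ even'' that the paper only sketches. What the paper's finer route buys in exchange is an explicit expression for the extra contribution created by each derivative jump, which makes visible exactly why the discontinuities do not degrade the order. One small point to make explicit in a final write-up: the bound $\varepsilon_{n+1}=O(h^{n+2})$ uses the hypothesis on $d$ (namely $D\in C^{n+1}$, giving $\nu\in C^{n+2}$), which Theorem \ref{thm3} retains from Theorem \ref{thm2} since only the hypothesis on $f$ is being weakened; your argument implicitly uses this and it is worth saying so.
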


\begin{proof}
Clearly for intervals $I_i$ which do not contain a discontinuity in the
derivative of $f(x)$, Theorem \ref{int2-thm} still holds. Thus, we need
consider only those intervals where there is point of discontinuity of the
derivative. In particular, we will consider only the case where there is a
single point of discontinuity in any particular interval, but the results
can easily be generalised to the case of multiple points of discontinuity.
Thus, we assume that on some interval $I_i$, the function $F(t)=f(x(t))$ is
given by
$$F(t)=\left\{\begin{array}{ll}F_1(t),&-1\leq t\leq t^*\\[2mm]
F_2(t),&~~t^*\leq t\leq 1\end{array}\right.$$
where $F_1(t^*)=F_2(t^*)$, but $F_1'(t^*)\neq F_2'(t^*)$.
The result obtained from the first integration by parts in the proof of 
Theorem \ref{int2-thm} still holds since $F$ is continuous. However, the
resulting integral must be split into two and so we have
\begin{eqnarray*}
\int_{-1}^1F(t)e_n(t)~dt&=&-\frac{1}{(n+2)!}\left[\int_{-1}^{t^*}F_1'(t)
\nu^{(n+2)}(\kappa(t))\phi_{n+2}(t)~dt+\right.\\
&&\hspace{22mm}\left.\int_{t^*}^1F_2'(t)
\nu^{(n+2)}(\kappa(t))\phi_{n+2}(t)~dt\right].
\end{eqnarray*}
For $n$ odd, the next step in the proof of Theorem \ref{int2-thm} was to 
integrate by parts
again, with the boundary terms vanishing. However, in this case, the
boundary terms do not vanish, but integration by parts gives
\begin{eqnarray*}
\int_{-1}^1F(t)e_n(t)~dt&=&-\frac{1}{(n+2)!}\left(F_1'(t^*)-F_2'(t^*)\right)
\nu^{(n+2)}(\kappa(t^*))\psi_{n+3}(t^*)\\
&&+\frac{1}{(n+2)!}\left[\int_{-1}^{t^*}\frac{d}{dt}\left(F_1'(t)
\nu^{(n+2)}(\kappa(t))\right)\psi_{n+3}(t)~dt+\right.\\
&&\hspace{22mm}
\left.\int_{t^*}^{1}\frac{d}{dt}\left(F_2'(t)
\nu^{(n+2)}(\kappa(t))\right)\psi_{n+3}(t)~dt\right].
\end{eqnarray*}
Now $\psi_{n+3}(t)$ is of one sign \cite{RR78} and so, by the Mean Value
Theorem for Integrals, the derivative term can be taken outside of each
integral, giving two terms each of which are similar to that obtained in 
Theorem \ref{int2-thm}. Converting back to $x$ coordinates, as in the proof of
Theorem \ref{thm2}, we again find that the integral terms are
$O(h^{n+4})$. However, the boundary terms, which were not present
previously, are $O(h^{n+3})$. Now when we sum over all the intervals, we
sum the integrals and this sum gives an error of $O(h^{n+3})$ since
$N=O(h^{-1})$, as previously. In the case of the boundary terms, we note
that only intervals containing a discontinuity of the derivative of $f$
have such boundary terms, and this is a fixed number which does not depend
on $h$. Thus, the contribution of these boundary terms is also $O(h^{n+3})$
and so the total error is $O(h^{n+3})$ as previously.

A similar analysis applies for $n$ even.
\end{proof}

\begin{table}[t]
\begin{center}
\begin{tabular}{|c|c|}
\hline
\hspace{2mm}$n$\hspace{2mm} & \hspace{1mm}Error Term\hspace{1mm} \\
\hline 
0 & $O(h^2)$ \\
1 & $O(h^4)$ \\
2 & $O(h^4)$ \\
3 & $O(h^6)$ \\
4 & $O(h^6)$ \\
\hline
\end{tabular}
\end{center}
\hspace{5mm}
\caption{The order of the error term in \Ref{err1} and \Ref{err2} for 
different values of $n$.}
\label{errtable}
\end{table}

A summary of the order of errors for different values of $n$ is given in
Table \ref{errtable}, which shows a similar pattern to the error in
Newton-Cotes integration.

\subsection{Gaussian quadrature}

In Case I, the integral on the right hand side of \Ref{res1} is that of a
polynomial, and so can be evaluated exactly. However, in Case II and for a
general function $F(t)$, the first integral on the right hand side of
\Ref{integral2} may have to be evaluated numerically using Gaussian
quadrature. When integrating a function $G(t)$ on the 
interval $[-1,1]$ using an $m$ point Gaussian quadrature rule, the error 
term is given by
$$\frac{G^{(2m)}(\eta)}{(2m)!}\int_{-1}^1 [P_m(t)]^2~dt,$$
where $P_m(t)$ is the $m^{\rm th}$ Legendre polynomial and $\eta\in(-1,1)$
\cite{BF10}. Converting back to the original $x$ coordinates, we note again
that each derivative of a function introduces a power of $h$, since
$dx/dt=\frac{1}{2}(n+1)h$. The functions we integrate always
involve the density function $D(t)$ and the change of function from $D(t)$
(which is approximated by $p_n(t)$) back to $d(x)$ introduces another power of
$h$. Thus, the error associated with a simple $m$ point Gaussian quadrature
rule is $O(h^{2m+1})$.

In Case II, as shown in the proof of Theorem \ref{thm2}, the approximation 
error for a single integral for $n$ odd is $O(h^{n+4})$. Thus, the error 
from the quadrature will be the same order as the approximation error if
$m=(n+3)/2$. Similarly for $n$ even, the approximation error for a single
integral is $O(h^{n+3})$
and so the quadrature error will be the same order if $m=(n+2)/2$.
Thus, for $n=2k-1$ or $n=2k$, the number of integration points used should 
be at least $m=k+1$ to ensure that the error in the integration does not 
dominate the approximation error.

\section{Computing the Polynomial Density from the \\
Frobenius-Perron Equation}
\label{method}

\subsection{Discretisation of the Frobenius-Perron equation}\label{FP-sec}

Using the approach described in the previous section, we have reduced
the description of the density associated with an iteration function 
$g:I\to I$ to a finite dimensional approximation
which is characterised by the $N$-dimensional vector $\m$ whose
components are 
$$m_i=\int_{I_i}d\mu,\quad i=1,2,\ldots,N.$$
To determine this vector, we must solve the Frobenius-Perron equation 
given by
$$\int_J d\mu=\int_{g^{-1}(J)} d\mu$$
for any interval $J\subset I$. To obtain a set of determining
equations for the vector $\m$, we use the Frobenius-Perron equation
with $J=I_i$, $i=1,2,\ldots,N$ giving
$$\int_{I_i}d\mu=\int_{g^{-1}(I_i)}d\mu,\quad i=1,2,\ldots,N.$$
Now by definition, $\int_{I_i}d\mu=m_i$. Similarly, the right hand
side can be written as
$$\int_{g^{-1}(I_i)}d\mu=\bigcup_{k=1}^K\int_{x_{L,k}}^{x_{R,k}}d(x)~dx,$$
where $K$ is the number of preimages of the interval $I_i$, and
for each $k=1,\ldots,K$, $g(x_{L,k})=x_{i-1}$, $g(x_{R,k})=x_i$.
Now each of the preimages of the interval $I_i$ may be contained within a
single interval $I_j$ for some $j$ or it may contain some whole intervals
and only parts of others. The evaluation of this type of integral was 
considered in Case I in the previous section.

We note that when $n=0$ (piecewise constant polynomial approximations),
this is the standard Ulam method for approximating the invariant
density \cite{Ul60a}.

\subsection{Convergence}

We now consider the convergence of this method as $N\to\infty$.
We define the approximation space
\[
\Delta_N^n = \left\{ f:[0,1]\to [0,1] : f|_{[x_i,x_{i+n+1}]} \in \Pi_n, 
i=0,n+1,2(n+1),\ldots,N-(n+1)\right\},
\]
of functions which are piecewise polynomials of degree $n$ on $n+1$
adjacent intervals from the given partition.  Note that $\dim(\Delta_N^n) =
N$ and functions in $\Delta_N^n$ are not necessarily continuous.

In Section \ref{poly-sec}, for a given density $d$, we constructed a 
function $p_N^n\in\Delta_N^n$ which satisfies
\[
\int_{I_i} p_N^n(x)\; dx = m_i = \int_{I_i} d(x)\; dx, \quad i=1,\ldots, N.
\]
In other words, $p_N^n=L_N^n d$, where the projection 
$L_N^n:L^1([0,1],\R)\to\Delta_N^n$ is characterised by
\begin{equation}\label{eq:proj}
\int_{I_i} (L_N^n d)(x)\; dx = \int_{I_i} d(x)\; dx, \quad i=1,\ldots, N,
\end{equation}
for $d\in L^1([0,1],\R)$.  Let $(\;,\;)$ denote the duality pair between
$L^1([0,1],\R)$ and its dual and let $T_N$ be the space spanned by the
characteristic functions $\chi_1,\ldots,\chi_N$ on the intervals
$I_1,\ldots,I_N$. Then (\ref{eq:proj}) can alternatively be written as
\begin{equation}\label{eq:proj2}
(L_N^nd,v)=(d,v) \quad \mbox{for all } v \in T_N.
\end{equation}
Thus, $L_N^n d$ is the \emph{Petrov-Galerkin projection} of $d$ (with
respect to the spaces $\Delta_N^n$ and $T_N$).

In Section \ref{FP-sec}, for a given map $g:[0,1]\to [0,1]$, we constructed 
an approximate invariant density $\tilde d\in\Delta_N^n$ by requiring that
\begin{equation}\label{eq:discrete_FP}
\int_{I_i} \tilde d(x)\; dx = \int_{g^{-1}(I_i)} \tilde d(x)\; dx,
\quad i=1,\ldots,N.
\end{equation}
The next Lemma shows that this is equivalent to computing $\tilde d\in\Delta_N^n$ such that
$$L_N^nP\tilde d = \tilde d,$$
where $P$ is the Frobenius-Perron operator, i.e.\
$\tilde d$ is a fixed point of the \emph{discretised Frobenius-Perron 
operator}
\begin{equation}\label{dfpo}
P_N^n := L_N^n P.
\end{equation}

\begin{lemma}
A function $\tilde d\in\Delta_N^n$ satisfies (\ref{eq:discrete_FP}) if and only
if it is a fixed point of the discrete Frobenius-Perron operator $P_N^n$.
\end{lemma}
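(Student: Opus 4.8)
The plan is to exploit the fact that a function in $\Delta_N^n$ is completely determined by its integrals over the subintervals $I_1,\ldots,I_N$. Indeed, on each block of $n+1$ adjacent intervals a function in $\Delta_N^n$ is a single polynomial in $\Pi_n$, and the uniqueness result established at the start of Section \ref{poly-sec} shows that such a polynomial is fixed by its $n+1$ subinterval integrals. Since $\dim(\Delta_N^n)=N$ and there are $N$ such integrals in total, the linear map $f\mapsto\left(\int_{I_1}f,\ldots,\int_{I_N}f\right)$ is an isomorphism from $\Delta_N^n$ onto $\R^N$. In particular, two elements of $\Delta_N^n$ coincide if and only if all their subinterval integrals agree, and I would establish this equivalence as the backbone of the argument.

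First I would observe that both $\tilde d$ and $P_N^n\tilde d=L_N^nP\tilde d$ lie in $\Delta_N^n$ (the latter because $L_N^n$ maps into $\Delta_N^n$, using that $P\tilde d\in L^1$). It then suffices to compare their integrals over each $I_i$. Using the defining property \Ref{eq:proj} of the projection $L_N^n$ followed by the defining property of the Frobenius-Perron operator $P$, namely $\int_{I_i}(Pd)(x)\,dx=\int_{g^{-1}(I_i)}d(x)\,dx$, I obtain
\[
\int_{I_i}(P_N^n\tilde d)(x)\,dx=\int_{I_i}(P\tilde d)(x)\,dx=\int_{g^{-1}(I_i)}\tilde d(x)\,dx,\qquad i=1,\ldots,N.
\]

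Combining these two ingredients finishes the argument. By the isomorphism above, $P_N^n\tilde d=\tilde d$ holds if and only if $\int_{I_i}(P_N^n\tilde d)\,dx=\int_{I_i}\tilde d\,dx$ for every $i$, and by the displayed identity the left-hand side equals $\int_{g^{-1}(I_i)}\tilde d\,dx$. Hence $P_N^n\tilde d=\tilde d$ is equivalent to $\int_{g^{-1}(I_i)}\tilde d\,dx=\int_{I_i}\tilde d\,dx$ for all $i$, which is precisely \Ref{eq:discrete_FP}. The only point demanding a little care is the injectivity of the integral map on $\Delta_N^n$; everything else is a direct substitution of the two defining properties. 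This is genuinely mild here, being a block-diagonal consequence of the polynomial uniqueness lemma already in hand, but it is the step that makes the equivalence go through, so I would state it explicitly rather than treat it as obvious.
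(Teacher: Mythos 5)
Your proposal is correct and is essentially the paper's own argument: the paper tests against the characteristic functions spanning $T_N$, which is exactly your map $f\mapsto\bigl(\int_{I_1}f,\ldots,\int_{I_N}f\bigr)$, and it likewise combines the defining property of $L_N^n$ with that of $P$ and invokes uniqueness of the projection where you invoke injectivity of the integral map on $\Delta_N^n$. Your version is slightly more explicit in grounding that injectivity in the uniqueness lemma of Section~\ref{poly-sec}, but the route is the same.
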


\begin{proof}
If $\tilde d$ satisfies (\ref{eq:discrete_FP}) then
$(\tilde d,v) = (P\tilde d,v)$ for all $v\in T_N$ since the characteristic 
functions on the intervals $I_i$ are a basis of $T_N$. By the definition of the 
projection $L_N^n$ in \Ref{eq:proj2}, $(L_N^nP\tilde d,v) = (P\tilde d,v)$ 
for all $v\in T_N$. Combining these two equalities, we arrive at
\[
(L_N^nP\tilde d,v) = (\tilde d,v)\quad\mbox{for all } v\in T_N,
\]
and since $\tilde d\in\Delta_N^n$ and the projection is unique, it follows that $P_N^n\tilde d=\tilde d$.

To prove the converse, let $\tilde d\in\Delta_N^n$ be a fixed point of
$P_N^n$, then $(L_N^nP\tilde d,v)=(\tilde d,v)$ for all $v\in T_N$. From
the definition of $L_N^n$, $(L_N^nP\tilde d,v)=(P\tilde d,v)$ for all $v\in
T_N$.  Combining these two equalities, we obtain
\[
(P\tilde d, v)=(\tilde d,v)\quad\mbox{for all }v\in T_N,
\]
which is equivalent to (\ref{eq:discrete_FP}).
\end{proof}

Analogous to the proof of Lemma 8 in \cite{DiDuLi93a} we can show:
\begin{lemma}\label{FP-fixed-point}
The discretised Frobenius-Perron operator $P_N^n$ has a nonzero fixed point
\[
P_N^nd_N^n = d_N^n\in\Delta_N^n,
\]
which satisfies
\begin{equation}\label{norm1}
\int_0^1 d_N^n(x)~dx=1.
\end{equation}
\end{lemma}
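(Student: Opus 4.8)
The plan is to reduce the statement to a finite-dimensional eigenvalue problem and then deal separately with the normalisation. First I would pass to measure coordinates: by the uniqueness result of Section \ref{poly-sec}, the linear map $\Phi:\Delta_N^n\to\R^N$, $f\mapsto(\int_{I_1}f,\ldots,\int_{I_N}f)$, is a bijection, since both spaces have dimension $N$ and on each block of $n+1$ intervals the $n+1$ masses determine the polynomial uniquely. In these coordinates $P_N^n$ becomes an $N\times N$ matrix $A$: because $L_N^n$ reproduces the mass on every $I_i$, the $i$-th coordinate of $\Phi(P_N^nf)$ equals $\int_{I_i}Pf=\int_{g^{-1}(I_i)}f$, which is exactly the Case I integral of \Ref{res1} and hence a fixed linear combination $\sum_j A_{ij}m_j$ of the coordinates $m_j=\int_{I_j}f$.

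The second step is to exhibit the eigenvalue $1$. Since $P$ conserves total mass and $L_N^n$ reproduces mass on each $I_i$, hence on all of $[0,1]$, the operator $P_N^n$ conserves total mass; in matrix terms every column of $A$ sums to $1$, i.e. $\mathbf{1}^T A=\mathbf{1}^T$ with $\mathbf{1}=(1,\ldots,1)^T$. Thus $A^T$ fixes $\mathbf{1}$, so $1$ is an eigenvalue of $A^T$ and therefore of $A$, and any right eigenvector $\m^\ast$ with $A\m^\ast=\m^\ast$ yields a nonzero fixed point $d_N^n=\Phi^{-1}(\m^\ast)\in\Delta_N^n$ of $P_N^n$.

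The final and genuinely delicate step is the normalisation $\int_0^1 d_N^n=1$, which amounts to producing a fixed point whose total mass $\mathbf{1}^T\m^\ast$ is nonzero, so that it can be rescaled; this is where I expect the main obstacle to lie. The naive route — applying Brouwer's theorem to $A$ on the probability simplex $\{\m\geq 0,\ \mathbf{1}^T\m=1\}$, as works in Ulam's original $n=0$ case — breaks down, because the reconstruction $\Phi^{-1}$ is not positivity preserving (the measure-preserving polynomials of Table \ref{basisfns} change sign), so $A$ has negative entries for $n\geq 1$ and need not map the simplex into itself. Purely linear-algebraic information is also insufficient: although $\mathbf{1}^T A=\mathbf{1}^T$ with $\mathbf{1}\neq 0$, a nontrivial Jordan block of $A$ at the eigenvalue $1$ could a priori force every $1$-eigenvector into the zero-mass hyperplane $\{\mathbf{1}^T\m=0\}$. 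To close this gap I would follow the strategy of Lemma 8 in \cite{DiDuLi93a} and produce the fixed point directly by a Brouwer argument on a nonempty compact convex subset $K$ of the $P_N^n$-invariant affine hyperplane $\{f\in\Delta_N^n:\int_0^1 f=1\}$ that is mapped into itself. I expect the construction of such a $K$ — equivalently, certifying that the $1$-eigenspace of $A$ is not contained in the zero-mass hyperplane — to be the hardest part, since it requires an a priori bound on $P_N^n$ rather than merely its mass-conservation property.
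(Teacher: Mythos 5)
Your reduction to the matrix $A$ via the bijection $\Phi$ is correct (Lemma 2.1 gives invertibility block by block), and so is the identity $\mathbf{1}^TA=\mathbf{1}^T$, since $L_N^n$ preserves every mass $m_i$ and $P$ preserves the total integral. Your diagnosis of the remaining difficulty is also correct, and it is a genuine gap in your write-up: column sums equal to one do \emph{not} force the existence of a fixed vector with nonzero total mass. For instance $A=\left(\begin{smallmatrix}2&1\\-1&0\end{smallmatrix}\right)$ has both column sums equal to $1$ but a single Jordan block at the eigenvalue $1$, so its only fixed direction is $(1,-1)^T$, which lies in the zero-mass hyperplane; and since the basis functions $\ell_{n,k}$ change sign for $n\geq 1$, $A$ has negative entries and Brouwer on the probability simplex is unavailable. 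So your argument establishes a nonzero fixed point of $P_N^n$ but not the normalisation \Ref{norm1}, and the lemma is not proved. You should know, however, that the paper itself supplies no more than you do: its entire ``proof'' is the sentence ``Analogous to the proof of Lemma 8 in \cite{DiDuLi93a} we can show,'' which is precisely the deferral you propose in your final paragraph. In that sense you have reconstructed the intended argument and honestly flagged the part that is outsourced.

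For completeness, the ingredient that \cite{DiDuLi93a} supplies is the one you anticipate: an a priori bound that makes a compact convex subset of the slice $\{f\in\Delta_N^n:\int_0^1 f=1\}$ invariant. Under the hypotheses of Theorem \ref{conv-thm} ($g$ piecewise $C^2$ and stretching, $\gamma_2(n)\beta<1$), the Lasota--Yorke inequality \Ref{LY} combined with the stability estimate \Ref{gamma2} and the elementary bound $\|f\|_1\leq\left|\int_0^1 f\right|+\bigvee_0^1 f$ yields, after iteration, a uniform bound on $\bigvee_0^1 (P_N^n)^k f$ along the slice; the set $K=\{f\in\Delta_N^n:\int_0^1 f=1,\ \bigvee_0^1 f\leq C\}$ is then nonempty, convex, compact (being closed and bounded in a finite-dimensional space) and $P_N^n$-invariant for suitable $C$, and Brouwer's theorem (equivalently, a limit point of the Ces\`aro averages $\frac1k\sum_{j<k}(P_N^n)^jf_0$, each of which has integral one) produces the fixed point satisfying \Ref{norm1}. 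Two consequences worth recording from your analysis: first, the lemma is \emph{not} a consequence of the discretisation alone but silently inherits the hypotheses on $g$ under which \Ref{LY} holds, even though none are stated in it; second, without such hypotheses the conclusion can genuinely fail at the level of the matrix $A$, as the example above shows.
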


We use the framework of \cite{DiDuLi93a} in order to prove convergence of our
scheme in the case that $g$ is piecewise $C^2[0,1]$ and stretching (i.e.\
$\inf_{x\in [0,1]} |g'(x)| > 1$).  The only difference to the setup in
\cite{DiDuLi93a} is that we are dealing with a Petrov-Galerkin projection instead of a standard Galerkin projection.  We are thus working with a fixed conjugate basis $\{A^i\}_0^n$ of $\Delta_{n+1}^n$,  
\[
A^i = \chi_{i+1}, \quad \quad i=0,\ldots,n,
\]
and choose the basis $\{a^i\}_0^n$ of $\Delta_{n+1}^n$ such that
\[
(a^i,A^k) = \delta_{ik}, \quad i,k=0,\ldots,n.
\]
Lemma 10 of \cite{DiDuLi93a} now yields stability of the projection $L_N^n$:
\begin{lemma} There exist constants $\gamma_1(n)$ and $\gamma_2(n)$ such that for $d\in L^1(0,1)$
\[
\| L_N^n d\|_1 \leq \gamma_1(n) \|d\|_1
\]
and if $d\in BV(0,1)$ then
\begin{equation}\label{gamma2}
\bigvee_0^1 L_N^n d \leq \gamma_2(n) \bigvee_0^1 d.
\end{equation}
\end{lemma}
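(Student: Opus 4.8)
The plan is to exploit the block structure of the projection. Since $\Delta_N^n$ consists of functions that are a single polynomial of degree $n$ on each disjoint block $[x_i,x_{i+n+1}]$ (with $i=0,n+1,2(n+1),\dots$), and $L_N^n$ only imposes the measure-matching conditions $\int_{I_j}(L_N^nd)=\int_{I_j}d$ interval by interval, the operator $L_N^n$ acts independently on each block. I would therefore fix a single block, rescale it to $[-1,1]$ via \Ref{tofx}--\Ref{xoft}, and carry out all estimates there. The scaling $(L_N^nd)(x)=\tfrac{2}{(n+1)h}\,p_n(t(x))$ together with $dx=\tfrac12(n+1)h\,dt$ makes every resulting constant depend only on $n$, which is exactly the $N$-independence the lemma requires.

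The $L^1$ bound is the easy half. On a block, $\int_{\mathrm{block}}|L_N^nd|\,dx=\int_{-1}^1|p_n(t)|\,dt\le\sum_{k=0}^n|M_k|\int_{-1}^1|\ell_{n,k}(t)|\,dt$, and since $|M_k|=\big|\int_{I_{i+k+1}}d\big|\le\int_{I_{i+k+1}}|d|$ we obtain $\sum_k|M_k|\le\int_{\mathrm{block}}|d|\,dx$. Summing over all blocks gives $\|L_N^nd\|_1\le\gamma_1(n)\|d\|_1$ with $\gamma_1(n)=\max_k\int_{-1}^1|\ell_{n,k}(t)|\,dt$.

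For the variation bound I would split $\bigvee_0^1 L_N^nd$ into the smooth variation inside each block plus the jumps across block boundaries. For the interior part, the naive estimate $\int_{-1}^1|p_n'|\le\sum_k|M_k|\int|\ell_{n,k}'|$ is fatal: after undoing the rescaling it produces a factor $h^{-1}$ and a bound of order $h^{-1}\|d\|_{1,\mathrm{block}}$ that blows up as $h\to0$. The remedy is property (iii) of the basis functions, $\sum_k\ell_{n,k}\equiv\tfrac{n+1}{2}$, hence $\sum_k\ell_{n,k}'\equiv0$, which permits summation by parts to rewrite $p_n'(t)=\sum_{k=0}^{n-1}(M_k-M_{k+1})\,C_k(t)$ with $C_k=\sum_{j=0}^k\ell_{n,j}'$. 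Thus $\int_{-1}^1|p_n'|\le C_2(n)\sum_k|M_{k+1}-M_k|$, and since $M_{k+1}-M_k=h(\mathrm{avg}_{I_{i+k+2}}d-\mathrm{avg}_{I_{i+k+1}}d)$, each difference is bounded by $h$ times the variation of $d$ over two adjacent subintervals. The factor $h$ now cancels the $h^{-1}$ from the change of variables, leaving the interior variation on the block bounded by a constant $C_3(n)$ times $\bigvee_{\mathrm{block}}d$.

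The jumps are handled analogously. Using $\tfrac{2}{n+1}\sum_k\ell_{n,k}(\pm1)=1$ (again property (iii)), the one-sided limits of $L_N^nd$ at a block boundary are weight-one signed averages of the interval-averages of $d$ over the adjacent block, so they reproduce constants exactly; subtracting the average of $d$ on the subinterval touching the boundary, each jump is bounded by $C_5(n)$ times the variation of $d$ over a neighbourhood of that boundary. Summing the smooth and jump contributions over all blocks, with each block's variation counted a bounded number of times, yields $\bigvee_0^1 L_N^nd\le\gamma_2(n)\bigvee_0^1 d$. The main obstacle is precisely the interior estimate: without the reproduction-of-constants property the bound degenerates as $h\to0$, and the whole argument hinges on converting $p_n'$ into the differences $M_{k+1}-M_k$ that carry the compensating power of $h$.
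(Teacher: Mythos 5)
Your proof is correct, but it is worth noting that the paper does not actually prove this lemma at all: it imports it wholesale from Lemma~10 of the Ding--Du--Li paper \cite{DiDuLi93a}, after setting up the conjugate basis $\{A^i\}$ and the dual basis $\{a^i\}$ with $(a^i,A^k)=\delta_{ik}$ so that the cited result applies to the Petrov--Galerkin projection $L_N^n d=\sum_i (d,A^i)a^i$. What you have written is therefore a self-contained replacement for that citation, and it is sound: the blockwise reduction to $[-1,1]$, the $L^1$ bound via $\sum_k|M_k|\le\int_{\mathrm{block}}|d|$, and above all the observation that the naive variation estimate picks up a factor $h^{-1}$ from the rescaling $(L_N^nd)(x)=\tfrac{2}{(n+1)h}p_n(t(x))$ which must be cancelled. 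Your Abel-summation step, valid because $\sum_k\ell_{n,k}'\equiv 0$ by property (iii), converts $p_n'$ into differences $M_{k+1}-M_k=O(h\bigvee d)$ and supplies exactly the compensating power of $h$; the jump terms at block boundaries are handled by the same reproduction-of-constants device. This is, at bottom, the same mechanism that makes the cited Lemma~10 work (there one subtracts $Q_N c$ for a constant $c$ and uses $\|f-c\|_{L^1(\mathrm{block})}\le h\cdot\mathrm{osc}(f)$ against $\bigvee a^i=O(h^{-1})$), so your argument buys transparency and an explicit constant $\gamma_2(n)$ in terms of $\int_{-1}^1\bigl|\sum_{j=0}^k\ell_{n,j}'\bigr|\,dt$ and $\ell_{n,k}(\pm1)$, at the cost of redoing work the paper delegates to the reference. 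The only point to make fully explicit is that for $d\in BV(0,1)$ you work with a good (say right-continuous) representative so that the oscillation bound $|\mathrm{avg}_{J_2}d-\mathrm{avg}_{J_1}d|\le\bigvee_{J_1\cup J_2}d$ is legitimate; this is standard and does not affect the conclusion.
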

As in \cite{DiDuLi93a}, consistency of the projection is a standard result from approximation theory:
\begin{lemma}
Let $d\in L^1(0,1)$, then
\[
\lim_{N\to\infty} \|d-L_N^n d\|_1 = 0.
\]
\end{lemma}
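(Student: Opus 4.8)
The plan is to use the standard uniform-boundedness-plus-density argument for convergence of a family of projections. The two ingredients needed are already in hand: the preceding stability lemma supplies the uniform bound $\|L_N^n d\|_1 \leq \gamma_1(n)\|d\|_1$ with $\gamma_1(n)$ independent of $N$, and the error analysis of Section \ref{poly-sec} supplies convergence on a dense subclass of densities. The argument then reduces to a three-term triangle inequality.

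First I would establish the result on a dense subset, taking $d\in C^{n+1}[0,1]$. On each block $[x_i,x_{i+n+1}]$ of $n+1$ adjacent intervals, Theorem \ref{case1thm}, specifically the estimate \Ref{res2}, gives $\int_{-1}^1|D(t)-p_n(t)|\,dt = O(h^{n+2})$. Transforming back to the $x$-coordinate via $D(t)=chd(x(t))$, $p_n(t)=chp_N^n(x(t))$ and $dx/dt=ch$ with $c=\tfrac12(n+1)$, the factors of $ch$ cancel, so this is exactly $\int_{x_i}^{x_{i+n+1}}|d(x)-(L_N^n d)(x)|\,dx = O(h^{n+2})$. Since there are $N/(n+1)=O(h^{-1})$ such blocks, summing yields $\|d-L_N^n d\|_1 = O(h^{n+1})\to 0$ as $N\to\infty$.

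Then, for general $d\in L^1(0,1)$ and any $\varepsilon>0$, I would choose $\phi\in C^{n+1}[0,1]$ with $\|d-\phi\|_1<\varepsilon$, which is possible since smooth functions are dense in $L^1$. Writing
\[
\|d-L_N^n d\|_1 \leq \|d-\phi\|_1 + \|\phi-L_N^n\phi\|_1 + \|L_N^n(\phi-d)\|_1,
\]
the first term is $<\varepsilon$, the last is $\leq\gamma_1(n)\|\phi-d\|_1<\gamma_1(n)\varepsilon$ by the stability lemma, and the middle term tends to $0$ by the first step. Hence $\limsup_{N\to\infty}\|d-L_N^n d\|_1 \leq (1+\gamma_1(n))\varepsilon$, and since $\varepsilon>0$ was arbitrary the limit is $0$.

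Because both the stability bound and the local error estimate are already proved, there is no serious obstacle. The only point requiring mild care is the change-of-variables bookkeeping in the first step: verifying that the $O(h^{n+2})$ bound of \Ref{res2}, which is stated in the $t$-variable over $[-1,1]$, translates without loss of order into the $x$-variable over one block, so that summing over the $O(h^{-1})$ blocks still leaves a bound that vanishes. This is precisely why the paper classifies the result as standard.
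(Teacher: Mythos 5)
Your proof is correct, and it is precisely the standard uniform-boundedness-plus-density argument that the paper invokes without writing out (it simply cites the lemma as ``a standard result from approximation theory'' following \cite{DiDuLi93a}). The first half of your argument --- converting the block estimate \Ref{res2} back to the $x$-variable and summing over the $O(h^{-1})$ blocks to get $\|d-L_N^nd\|_1=O(h^{n+1})$ for smooth $d$ --- is in fact exactly the computation the paper carries out later in the proof of Corollary \ref{corol}, so nothing in your write-up departs from the paper's framework.
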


If $g$ is piecewise $C^2[0,1]$ and stretching, then the Lasota-Yorke
theorem \cite{LaYo73a} implies that for any function $d\in L^1(0,1)$ of
bounded variation, $Pd$ is of bounded variation as well and there exist
constants $\alpha>0$ and $\beta \leq 2/M$ (with $M=\inf_{x\in [0,1]}
|g'(x)|>1$) such that
\begin{equation}\label{LY}
\bigvee_0^1 Pd \leq \alpha \|d\|_1 + \beta \bigvee_0^1 d.
\end{equation}
Corollary 3 and 4 of \cite{DiDuLi93a} now yield convergence of our scheme:
\begin{theorem}\label{conv-thm}
Let $g$ be piecewise $C^2[0,1]$ and stretching.  If $\beta <1$ and 
$\gamma_2(n)\beta < 1$ then the sequence $(d_N^n)_N$ of fixed points of the 
discretised Frobenius-Perron operator has a subsequence which converges in 
$L^1(0,1)$ to an invariant density $d$ of $g$. Moreover,
\[
\|d-d^n_N\|_1 = O(\|d-L_N^n d\|_1) \quad \mbox{as }N\to\infty.
\]
\end{theorem}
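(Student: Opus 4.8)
The plan is to verify that our setting feeds correctly into the abstract convergence theory of Ding, Du and Li (Corollaries 3 and 4 of \cite{DiDuLi93a}), whose hypotheses are exactly the three properties just established: the $L^1$- and $BV$-stability of $L_N^n$, its consistency $\|d-L_N^n d\|_1\to 0$, and the Lasota--Yorke inequality \Ref{LY} for $P$. The only structural departure from \cite{DiDuLi93a} is that $L_N^n$ is a Petrov--Galerkin rather than a Galerkin projection; but everything their argument uses about the projection is contained in \Ref{gamma2} and the consistency lemma, which hold verbatim here, so the machinery carries over unchanged. I would therefore prove the two assertions separately, treating the existence of a convergent subsequence first and the quantitative rate second.

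For the compactness statement I would begin by deriving a uniform bound on the total variation of the fixed points. Since $d_N^n=P_N^nd_N^n=L_N^nPd_N^n$ and $\|d_N^n\|_1=1$ by \Ref{norm1} (Lemma \ref{FP-fixed-point}), combining the $BV$-stability \Ref{gamma2} with \Ref{LY} gives
\[
\bigvee_0^1 d_N^n \le \gamma_2(n)\bigvee_0^1 Pd_N^n \le \gamma_2(n)\bigl(\alpha+\beta\bigvee_0^1 d_N^n\bigr),
\]
so that the hypothesis $\gamma_2(n)\beta<1$ yields $\bigvee_0^1 d_N^n \le \gamma_2(n)\alpha/(1-\gamma_2(n)\beta)$, independent of $N$. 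With uniform $L^1$ and $BV$ bounds in hand, Helly's selection theorem extracts a subsequence converging in $L^1(0,1)$ to some $d$ of bounded variation. To identify $d$ as an invariant density I would estimate
\[
\|Pd-d\|_1 \le \|P(d-d_N^n)\|_1 + \|Pd_N^n-L_N^nPd_N^n\|_1 + \|d_N^n-d\|_1,
\]
using $L_N^nPd_N^n=d_N^n$ in the last term. The outer terms vanish along the subsequence because $P$ is an $L^1$-contraction and $d_N^n\to d$; the middle term vanishes because $\{Pd_N^n\}$ is relatively compact in $L^1$ (uniformly bounded variation) while the uniformly bounded operators $L_N^n$ converge strongly to the identity, hence uniformly on compact sets.

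The quantitative estimate is where the real work lies. Using $Pd=d$ and $d_N^n=L_N^nPd_N^n$, the error $e_N:=d-d_N^n$ satisfies the fixed-point identity
\[
e_N = (d-L_N^n d) + L_N^nP\,e_N,
\]
so that formally $e_N=(I-L_N^nP)^{-1}(d-L_N^n d)$ and the claimed bound $\|d-d_N^n\|_1=O(\|d-L_N^n d\|_1)$ follows immediately, provided $(I-L_N^nP)^{-1}$ is bounded uniformly in $N$ on the relevant complement of the fixed-point direction. Establishing this uniform invertibility is the main obstacle: it rests on the spectral gap of $P$ on $BV$ (a consequence of \Ref{LY} via Ionescu-Tulcea--Marinescu theory) together with a perturbation argument showing that $L_N^nP$ approximates $P$ closely enough, in the appropriate operator topology, that the isolated simple eigenvalue $1$ and its spectral projection persist with a uniformly bounded resolvent on the complementary subspace. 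This is precisely the content of Corollaries 3 and 4 of \cite{DiDuLi93a}, and since their proof draws only on the stability and consistency of the projection together with \Ref{LY}, I would invoke it directly rather than reproduce the perturbation estimates, noting once more that replacing the Galerkin by the Petrov--Galerkin projection $L_N^n$ leaves every step intact.
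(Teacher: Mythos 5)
Your proposal is correct and follows essentially the same route as the paper, which likewise just verifies the stability, consistency and Lasota--Yorke hypotheses and then invokes Corollaries 3 and 4 of \cite{DiDuLi93a}, remarking only that the Petrov--Galerkin nature of $L_N^n$ is the sole departure from their setting. The paper gives no further detail beyond that citation, so your sketch of the internals (the uniform $BV$ bound from $\gamma_2(n)\beta<1$, Helly compactness, and the fixed-point identity with uniform invertibility of $I-L_N^nP$ on the mean-zero complement) goes beyond what is written there but is a faithful account of the cited machinery.
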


\begin{corollary}\label{corol}
Under the conditions of Theorem \ref{conv-thm} and for a given value of
$n$,
\begin{equation}\label{den-err}
\|d-d_N^n\|_1=O(h^{n+1})=
O\left(\frac{1}{N^{n+1}}\right)\quad \mbox{as }N\to\infty.
\end{equation}
\end{corollary}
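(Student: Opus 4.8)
The plan is to combine the qualitative convergence already established in Theorem \ref{conv-thm} with the quantitative local error bound from Theorem \ref{case1thm}. Theorem \ref{conv-thm} tells us that $\|d-d_N^n\|_1 = O(\|d-L_N^n d\|_1)$, so the entire problem reduces to estimating the projection error $\|d-L_N^n d\|_1$, where $L_N^n d = p_N^n$ is precisely the measure-preserving piecewise polynomial constructed in Section \ref{poly-sec}. Thus the task is to show $\|d-L_N^n d\|_1 = O(h^{n+1})$ and then to translate $h = O(1/N)$.

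First I would estimate the error on a single block of $n+1$ adjacent intervals. On $[x_i,x_{i+n+1}]$, I would apply the change of variables \Ref{tofx}--\Ref{xoft} and invoke \Ref{res2} of Theorem \ref{case1thm} with $t_\ell=-1$, $t_r=1$, giving $\int_{-1}^1 |D(t)-p_n(t)|\,dt = O(h^{n+2})$. The key observation is that this bound is unchanged when converted back to the $x$ variable: since $D(t)=ch\,d(x(t))$, $p_n(t)=ch\,p_N^n(x(t))$, and $dx = ch\,dt$ with $c=\frac{1}{2}(n+1)$ fixed, the Jacobian factors cancel and $\int_{x_i}^{x_{i+n+1}} |d(x)-p_N^n(x)|\,dx = \int_{-1}^1 |D(t)-p_n(t)|\,dt = O(h^{n+2})$ per block. (The identity $p_n(t)=ch\,p_N^n(x(t))$ is itself justified by the uniqueness lemma of Section \ref{poly-sec}, as both sides are degree-$n$ polynomials preserving the same masses $M_j$.)

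Then I would sum over all $N/(n+1)$ blocks. Since each block contributes $O(h^{n+2})$ and $N=O(1/h)$, the total $L^1$ error is $\frac{N}{n+1}\cdot O(h^{n+2}) = O(h^{n+1})$, so $\|d-L_N^n d\|_1 = O(h^{n+1})$. Feeding this into Theorem \ref{conv-thm} gives $\|d-d_N^n\|_1 = O(h^{n+1})$, and since $h=(x_N-x_0)/N = O(1/N)$, this is $O(1/N^{n+1})$, as claimed.

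The main obstacle lies less in the summation --- which is routine bookkeeping, analogous to passing from a simple to a composite quadrature rule --- and more in the hypotheses. To invoke \Ref{res2} one needs $D\in C^{n+1}$, i.e.\ $d\in C^{n+1}(I)$, whereas the conditions of Theorem \ref{conv-thm} (piecewise $C^2$, stretching, $\gamma_2(n)\beta<1$) only guarantee that the invariant density $d$ is of bounded variation. Hence the corollary implicitly requires the additional regularity $d\in C^{n+1}(I)$, which I would state explicitly. The other point requiring care is that the per-block bound be genuinely uniform in $h$: the constant scales like $\sup_I |d^{(n+1)}|$ through the factor $(ch)^{n+2}$ appearing in $D^{(n+1)}$ (as computed in the proof of Theorem \ref{case1thm}), and since $c$ is fixed for fixed $n$, the same $O(h^{n+2})$ constant applies to every block, which is exactly what legitimises the summation.
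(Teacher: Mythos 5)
Your proposal is essentially the paper's own proof: reduce to $\|d-L_N^n d\|_1$ via Theorem \ref{conv-thm}, apply \Ref{res2} of Theorem \ref{case1thm} locally to get $O(h^{n+2})$ per piece, and sum $O(h^{-1})$ pieces to obtain $O(h^{n+1})$. Your observation that this implicitly requires $d\in C^{n+1}(I)$ --- a regularity not guaranteed by the hypotheses of Theorem \ref{conv-thm} --- is correct and is left unstated in the paper's proof as well.
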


\begin{proof}
The function $L_N^nd$ is the piecewise polynomial approximation to $d$
assuming that the measures $m_i$ on the intervals $I_i$, $i=1,\ldots,n$ are
known exactly. This is precisely the situation that we considered in
Theorem \ref{case1thm}. Thus,
\begin{eqnarray*}
\|d-L_N^nd\|_1&=&\int_I|d(x)-(L_N^nd)(x)|~dx\\
&=&\sum_{i=1}^N\int_{I_i}|d(x)-(L_N^nd)(x)|~dx\\
&=&\sum_{i=1}^N\int_{t_\ell}^{t_r}|D_i(t)-p_{n,i}(t)|~dt,
\end{eqnarray*}
for appropriate values of $t_{\ell}$ and $t_r$ and where $D_i(t)$ and
$p_{n,i}(t)$ are obtained from the restriction of $d(x)$ and $L_N^nd(x)$
respectively to the interval $I_i$. By \Ref{res2}, the final
integral above is $O(h^{n+2})$ and since a sum of $N=O(h^{-1})$ such terms
is required, then
$$\|d-L_N^nd\|_1=O(h^{n+1}).$$
Finally, Theorem \ref{conv-thm} tells us that
$$\|d-d_N^n\|_1=O(\|d-L_N^nd\|_1)=O(h^{n+1})=O\left(\frac{1}{N^{n+1}}
\right),$$
as required.
\end{proof}

\begin{remark}\label{LaYo-conds}
Theorem \ref{conv-thm} requires the map $g$ to be stretching, i.e.\
$\inf_{x\in[0,1]}|g'(x)|>1$ as this is a condition of the Lasota-Yorke 
Theorem \cite{LaYo73a}. However, by Theorem 3 of \cite{LaYo73a}, this
condition can be relaxed by requiring that $g^m$ be stretching for some
positive integer $m$ and that $\inf_{x\in[0,1]}|g'(x)|>0$. Under these
conditions, Theorem \ref{conv-thm} also holds. 
\end{remark}

\subsection{Examples}\label{examp}

As an example of the preceding theory, we consider in detail the map 
$g_1:[0,1]\to[0,1]$ defined by
\begin{equation}\label{gdef}
g_1(x)=\left\{\begin{array}{ll}
\displaystyle \frac{2x}{1-x^2}, & 0\leq x\leq\sqrt{2}-1\\[3mm]
\displaystyle \frac{1-x^2}{2x}, & \sqrt{2}-1\leq x\leq 1.
\end{array}\right.
\end{equation}
The invariant density for this map (which is the map $S_3$ of \cite{DiDuLi93a})
is
$$d_1(x)=\frac{4}{\pi(1+x^2)}.$$
The function $g_1$ and the density $d_1$ are shown in Fig.\ \ref{fig1}.

We note that $g_1$ is piecewise $C^2[0,1]$, but is not stretching 
as $g_1'(1)=-1$. However, $g_1^2$ is stretching and 
$\inf_{x\in[0,1]}|g_1'(x)|=1$
and so the conditions of Remark \ref{LaYo-conds} are satisfied.

\begin{figure}[t]
\centerline{
\includegraphics[width=0.45\textwidth]{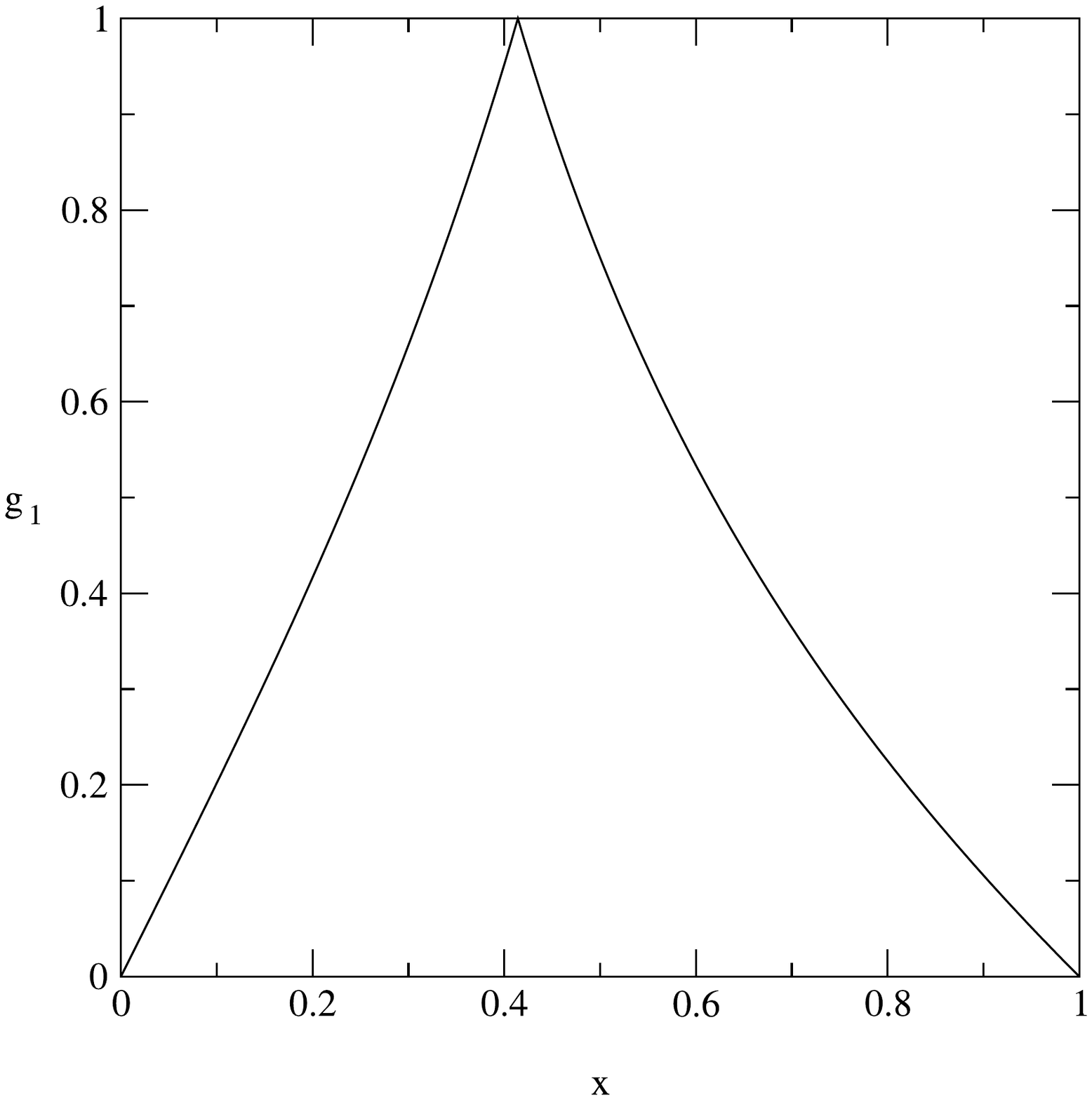}
\includegraphics[width=0.45\textwidth]{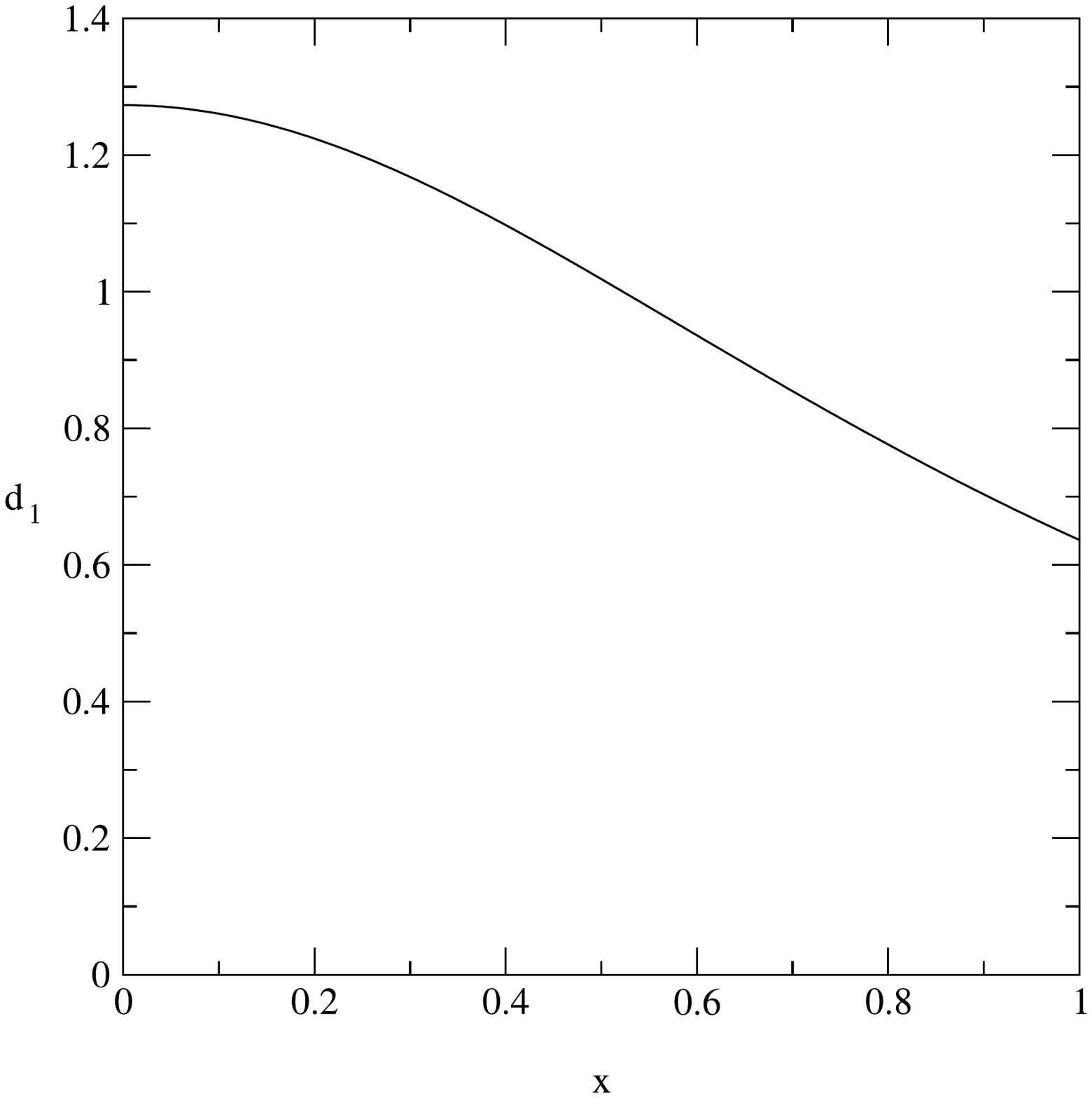}}
\caption{The map $g_1$ defined by \Ref{gdef} and it's density function $d_1$.}
\label{fig1}
\end{figure}

We divided the interval $[0,1]$ into $N$ subintervals of width $h=1/N$ 
and took groups of $n+1$ subintervals to construct a measure-preserving
polynomial approximation to the density on these intervals.
The method described in the previous section was used to compute the
matrix representation $A$ of the Frobenius-Perron operator and the
eigenvector $\tilde\m$ of $A$ associated with the eigenvalue nearest to 
$+1$ was computed using the inverse power method \cite{BF10}. Since the true
density function is known for this example, the one norm of the difference 
between the exact function $d_1(x)$ and the piecewise polynomial approximation 
$d_N^n(x)$ was determined. The program was written in {\sc Maple} so 
that the accuracy of the calculations could be increased if required.
The results are shown in Fig.\ \ref{fig:density_g1}. 
We note that the slope of the final segment of each of these lines is given
for $n=0,\,1,\,2,\,3$ by $-1.035$, $-2.049$, $-3.049$ and $-3.917$ respectively,
which gives experimental verification of the theoretical result of
Corollary \ref{corol} that the rate of convergence is $O(h^{n+1})$.


\begin{figure}[t]
\centering
\includegraphics[width=0.5\textwidth]{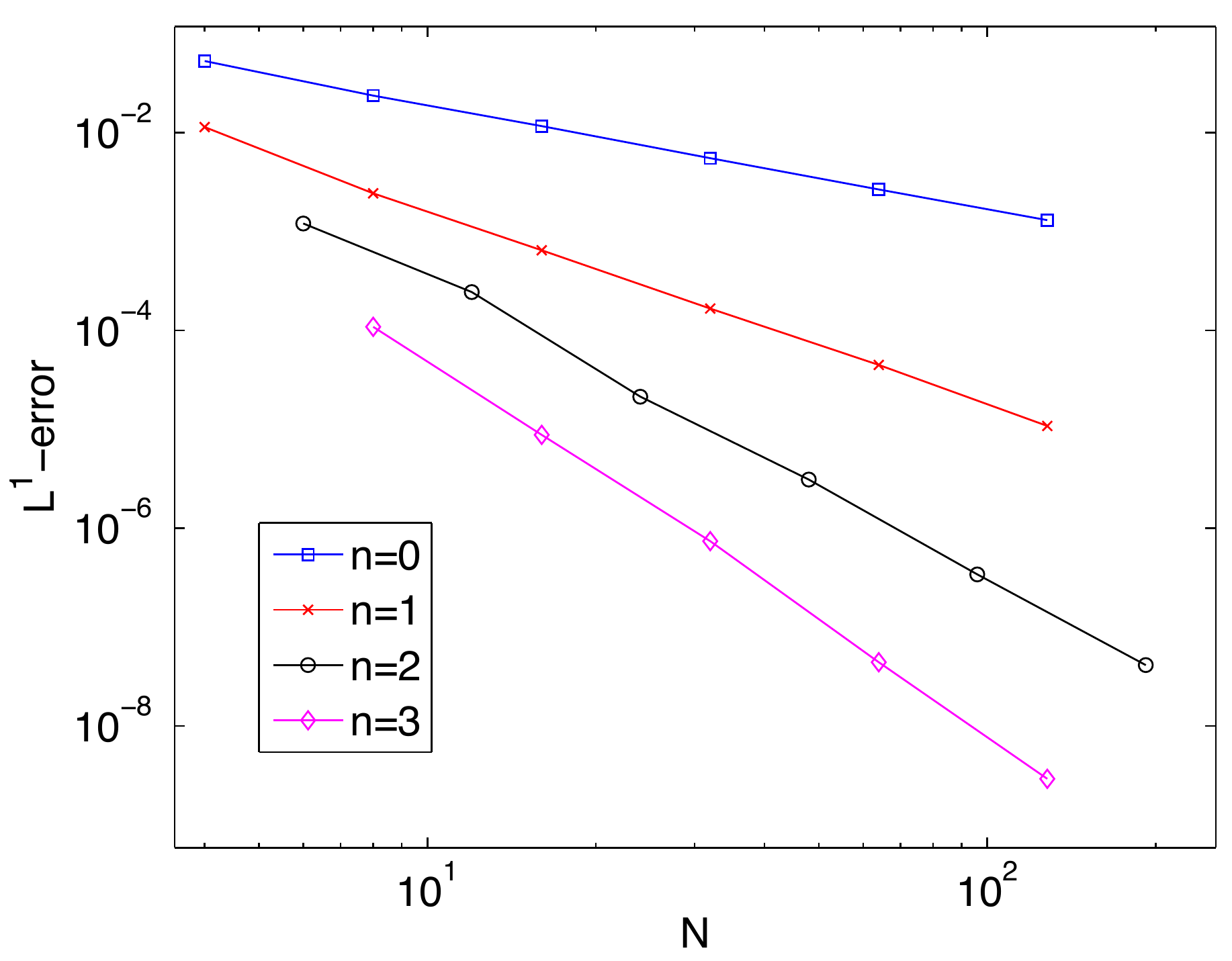}
\caption{$L^1$-errors in computed invariant densities for different values 
of $n$ and $N$ for the map $g_1$.}
\label{fig:density_g1}
\end{figure}

To illustrate the power of this method, taking 16 subintervals with
groups of 4 subintervals used to compute four cubic polynomial
approximations to the density gives an error of
$$||d_1(x)-d_{16}^3(x)||_1=8.776219\times 10^{-6}.$$
Thus, by using higher order polynomials, very accurate results can be
obtained using only a small number of subintervals. The standard Ulam
method using only piecewise constant approximations to the density and
the same number of intervals gives an error of
$$||d_1(x)-d_{16}^0(x)||_1=1.168727\times 10^{-2}.$$
For piecewise constant approximations, doubling $N$ approximately halves
the error, as can be seen in Figure~\ref{fig:density_g1}. Taking
the error for $N=128$ and successively halving it, we obtain an
approximation to the error for $N=128\times 2^7=16,384$ of $1.307149\times
10^{-3}/2^7=1.02121\times 10^{-5}$, which is still slightly larger than the
error above using $p_{16}^3(x)$!

We note that the results of \cite{DiDuLi93a} for this map appear to show
convergence rates of $O(h)$, $O(h^2)$ and $O(h^2)$ for the piecewise
constant, linear and quadratic cases. We get the same rates for the
constant and linear cases, but we have $O(h^3)$ for the piecewise quadratic
case.

We have also applied our method to the map
\begin{equation}\label{g2}
g_2(x)=\left\{\begin{array}{ll}
\displaystyle \frac{2x}{1-x}, & 0\leq x\leq\frac{1}{3}\\[3mm]
\displaystyle \frac{1-x}{2x}, & \frac{1}{3}\leq x\leq 1,
\end{array}\right.
\end{equation}
which is the map $S_4$ of \cite{DiDuLi93a}. This map has invariant density
given by
$$d_2(x)=\frac{2}{(1+x)^2}.$$
Our results for this map are similar to those for the previous map and
show the same rates of convergence.

Finally, we consider
$$g_3(x)=\left(\frac{1}{8}-2\left|x-\frac{1}{2}\right|^3\right)^{1/3}
+\frac{1}{2},$$
which is the map $S_2$ of \cite{DiDuLi93a}. In this case, the invariant density
is
$$d_3(x)=12\left(x-\frac{1}{2}\right)^2,$$
which is quadratic. From the proof of Theorem \ref{case1thm}, it is clear
that the
error term for the integrals involved in setting up the Frobenius-Perron
operator depend on the $(n+1)^{\rm th}$ derivative of the density. Since
the density is a quadratic function in this case, we would expect the error
to be zero with a piecewise quadratic approximation and this is indeed the
case. This is in contrast to the results of \cite{DiDuLi93a}, whose results
appear to show $O(h^3)$ convergence with piecewise quadratic approximations
for this map. We again achieve convergence rates of $O(h)$ and $O(h^2)$ for
piecewise constant and linear approximations.

The interesting aspect of this example is that the function $g_3$ does not
fulfil the conditions of Theorem \ref{conv-thm}, as it is neither $C^2[0,1]$
(as $g_3'(x)=\infty$ at the two points $x=\frac{1}{2}\pm 2^{-4/3}$) nor 
stretching (since the first and second derivatives at $x=1/2$ are zero). 
Also no iterates of $g_3$ are stretching and so Remark 
\ref{LaYo-conds} is no help either in this case. Thus,
it seems that the method converges for a wider class of maps than those
specified in Theorem \ref{conv-thm} and Remark \ref{LaYo-conds}. 
In fact, recently much progress has been made in establishing the 
existence of invariant densities for general interval maps which are 
not expanding, see for example \cite{ArLuVi09a,BrLuVa03a,BrRiSh08a}. 
However, it remains to be explored whether the theory 
developed in these papers yields the tools in order to prove Ulam's 
conjecture or the convergence of the scheme developed in this paper.

\section{Computing the Lyapunov Exponent Using Integration}

Having obtained a good approximation to the invariant density, we now want 
to compute an approximation to the Lyapunov exponent for the map $g$ which 
is given by
$$\sigma=\int_0^1\log|g'(x)|d(x)~dx.$$
Let $d_N^n$ be the discontinuous piecewise polynomial approximation to the
invariant density which is a nonzero fixed point of the discretised
Frobenius-Perron operator $P_N^n$ given by \Ref{dfpo}.
Then the approximation to the Lyapunov exponent that we can actually compute 
(ignoring numerical integration errors) is given by
$$\sigma_N^n=\int_0^1\log|g'(x)|d_N^n(x)~dx.$$
A simple error analysis gives that
\begin{eqnarray*}
|\sigma-\sigma_N^n|&=&\left|\int_0^1\log|g'(x)|(d(x)-d_N^n(x))~dx\right|\\
&\leq&\int_0^1\left|\log|g'(x)|(d(x)-d_N^n(x))\right|~dx\\
&\leq&||\log|g'|\;||_\infty||d-d_N^n||_1\\
&=&O(h^{n+1}),
\end{eqnarray*}
using H\"older's inequality and \Ref{den-err}, assuming that
$\log|g'|$ is bounded. Thus, it would appear that the error in
the Lyapunov exponent is determined by the error in the invariant density.
However, we now show that better results than this can often be obtained.

\begin{theorem}\label{lethm}
Assume that $g$ is piecewise $C^3[0,1]$ and stretching with $\beta < 1$ (cf.
\Ref{LY}) and $\gamma_2(n)\beta<1$ (cf. \Ref{gamma2}), and that 
$|g'|\in C^0[0,1]$. If the Frobenius-Perron operator $P$ has a unique invariant
density $d\in C^{n+2}[0,1]$ ($n$ odd) or $d\in C^{n+1}[0,1]$ ($n$ even) then
$$|\sigma-\sigma_N^n|=O(h^{n+2})$$
as $N=1/h\to\infty$.
\end{theorem}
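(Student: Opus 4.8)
The plan is to write the Lyapunov error as a linear functional of the density error and to extract one extra power of $h$ beyond the Hölder bound displayed before the theorem. With $f=\log|g'|$ and $\eta=d-d_N^n$ I would begin from the splitting
$$\sigma-\sigma_N^n=\int_0^1 f\,(d-d_N^n)\,dx=\underbrace{\int_0^1 f\,(d-L_N^n d)\,dx}_{T_A}+\underbrace{\int_0^1 f\,(L_N^n d-d_N^n)\,dx}_{T_B}.$$
The term $T_A$ is exactly the quadrature error for a \emph{smooth} density analysed in Section \ref{int-sec}: since $g$ is piecewise $C^3$ and $|g'|\in C^0[0,1]$ with $|g'|\ge M>1$, the integrand $f$ is continuous and piecewise $C^2$ ($n$ odd) or piecewise $C^1$ ($n$ even), while $d$ has the smoothness required. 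Thus Theorem \ref{thm3} applies directly and yields $T_A=O(h^{n+3})$ for $n$ odd and $T_A=O(h^{n+2})$ for $n$ even; in either case $T_A=O(h^{n+2})$.

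All the difficulty is concentrated in $T_B$. The naive bound $|T_B|\le\|f\|_\infty\|L_N^nd-d_N^n\|_1$ loses the order we are trying to gain, because $L_N^nd-d_N^n=L_N^nP\eta$ and hence $\|L_N^nd-d_N^n\|_1\le\gamma_1(n)\|\eta\|_1=O(h^{n+1})$ by Corollary \ref{corol}; this is only $O(h^{n+1})$. The extra power of $h$ must therefore come from the smoothness of the test function $f$ together with a conservation identity for the computed density. The crucial structural fact is that
$$(I-P)\eta=Pd_N^n-d_N^n=(I-L_N^n)\,Pd_N^n$$
is a \emph{projection residual}, so by the discrete Frobenius--Perron equation \Ref{eq:discrete_FP} it has vanishing integral on every subinterval,
$$\int_{I_i}\big(Pd_N^n-d_N^n\big)\,dx=\int_{g^{-1}(I_i)}d_N^n\,dx-\int_{I_i}d_N^n\,dx=0,\qquad i=1,\dots,N.$$

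To exploit this I would pass to the adjoint (Aubin--Nitsche) problem. Using invariance $(f,d)=(f,Pd)$ and the fixed-point identity $d_N^n=L_N^nPd_N^n$, a telescoping of $\int f\,(d-d_N^n)$ along the dynamics expresses $\sigma-\sigma_N^n$ in terms of pairings $(f\circ g^k,\,Pd_N^n-d_N^n)$ of Koopman iterates of $f$ against the zero-mean residual. Because that residual integrates to zero on each $I_i$, any such pairing equals $(\,f\circ g^k-\overline{f\circ g^k},\,Pd_N^n-d_N^n)$ for a piecewise constant $\overline{(\cdot)}\in T_N$, and Lipschitz continuity of the (piecewise smooth) test function then supplies the missing factor $h$, since $\|Pd_N^n-d_N^n\|_1\le 2\|\eta\|_1=O(h^{n+1})$ by Corollary \ref{corol}. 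Conceptually this is cleanest if one has a regular solution $\phi$ of the cohomological equation $f-\sigma=\phi-\phi\circ g$, for then $(f,\eta)=(\phi,\,Pd_N^n-d_N^n)$ exactly (using $\int\eta=0$), and the same zero-mean argument with $\phi$ in place of $f\circ g^k$ gives the bound $O(h)\cdot O(h^{n+1})=O(h^{n+2})$. Equivalently, one may apply the quadrature-superconvergence of Theorem \ref{int2-thm}/Theorem \ref{thm3} directly to the residual $(I-L_N^n)Pd_N^n$, since it is precisely a measure-preserving projection error.

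The step I expect to be the main obstacle is the control of this dual object. A genuine Lipschitz coboundary $\phi$ need not exist for a general mean-zero observable, and the Koopman iterates $f\circ g^k$ have Lipschitz constants growing like $M^k$; equivalently, in the quadrature-error route the function $Pd_N^n$ is only piecewise smooth, with $O(N)$ jumps at the images $g(x_i)$ of the grid points, so one must show that these many small jumps contribute only at higher order rather than destroying the superconvergence. This is exactly where the hypotheses enter: the Lasota--Yorke inequality \Ref{LY} with $\beta<1$ and $\gamma_2(n)\beta<1$ furnishes a spectral gap for $P$ on $BV$, so that $\int\eta=0$ gives exponential decay of $P^K\eta$ and hence of the telescoping remainder $(f\circ g^K,\eta)=(f,P^K\eta)$, while the piecewise $C^3$ regularity of $g$ (one derivative more than is needed for the density estimate) controls the growth of the iterates and the regularity of the dual solution. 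Balancing the truncation length against these competing rates is the delicate quantitative heart of the argument; once the regular dual solution (or a sufficiently accurate truncation of it) is available, the zero-subinterval-mean property of $Pd_N^n-d_N^n$ upgrades the $O(h^{n+1})$ density error to the claimed $O(h^{n+2})$ error in the Lyapunov exponent, uniformly in the parity of $n$.
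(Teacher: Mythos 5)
Your decomposition $\sigma-\sigma_N^n=T_A+T_B$ with $T_A=\int_0^1 f\,(d-L_N^nd)\,dx$ and $T_B=\int_0^1 f\,(L_N^nd-d_N^n)\,dx$ is precisely the one used in the paper's second proof of Theorem \ref{lethm}, and your treatment of $T_A$ via Theorem \ref{thm3} coincides with the paper's. The divergence, and the gap, is in $T_B$. The paper does not use an adjoint or telescoping argument there: it writes $L_N^nd-d_N^n=L_N^n(d-d_N^n)=h^{n+1}\delta_N^n$ with $\delta_N^n=L_N^n\delta$ bounded in $L^1$, integrates $T_B$ by parts against $\log|g'(x)|$, kills the boundary terms using $\int_0^1 L_N^nd\,dx=\int_0^1 d_N^n\,dx=1$ (cf.\ \Ref{norm1}), and then argues that the resulting integral gains one further power of $h$ by the same rescaling mechanism as in the proof of Theorem \ref{thm2}, giving $O(h^{n+2})$.

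Your replacement argument for $T_B$ does not close, for the reason you yourself flag but do not resolve. In the telescoping identity $(f,\eta)=(f\circ g^K,\eta)+\sum_{k=0}^{K-1}(f\circ g^k,\,Pd_N^n-d_N^n)$, the zero-subinterval-mean property of the residual (which is a correct and genuinely nice observation, following from \Ref{eq:discrete_FP}) buys a factor $h\cdot\mathrm{Lip}(f\circ g^k)$ per term, but $\mathrm{Lip}(f\circ g^k)$ grows like $(\sup|g'|)^k$, while the remainder $(f,P^K\eta)$ decays only like $\rho^K$ for the essential spectral radius $\rho<1$ of $P$ on $BV$. To force the remainder below $O(h^{n+2})$ you need $K\gtrsim (n+2)\log(1/h)/\log(1/\rho)$, and then $(\sup|g'|)^K$ is a negative power of $h$ that in general exceeds the single factor of $h$ gained from the zero-mean property; there is no reason the balance comes out in your favour under the stated hypotheses. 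The cohomological-equation variant founders on the same point: a Lipschitz solution $\phi$ of $f-\sigma=\phi-\phi\circ g$ need not exist, and the quadrature-superconvergence of Theorem \ref{int2-thm} cannot be applied to $(I-L_N^n)Pd_N^n$ because $Pd_N^n$ has $O(N)$ discontinuities, one in essentially every subinterval, so the boundary-term count in the proof of Theorem \ref{thm3} is no longer $O(1)$. As written, your argument therefore establishes only $|T_B|=O(h^{n+1})$, no better than the H\"older bound preceding the theorem, and the claimed $O(h^{n+2})$ is not proved. The missing ingredient is the paper's integration by parts of $T_B$ itself, using the equal normalisation of $L_N^nd$ and $d_N^n$ to remove the boundary terms.
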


\begin{proof}
We consider two different approaches to proving this result.

For the first proof, we note from Corollary \ref{corol} that
$$||d-d_N^n||_1=O(h^{n+1})$$
and so we write
\begin{equation}\label{derror}
d(x)=d_N^n(x)+h^{n+1}\delta(x).
\end{equation}
where $\delta\in L^1(0,1)$. Integrating by parts then gives
\begin{eqnarray*}
|\sigma-\sigma_N^n|&=&\left|\int_0^1\log|g'(x)|(d(x)-d_N^n(x))~dx\right|\\
&=&\left[\log|g'(x)|\int_0^x d(x')-d_N^n(x')~dx' \right]_0^1\\
&&-h^{n+1}\int_0^1\frac{d}{dx}\log|g'(x)|\int_0^x\delta(x')~dx'~dx.
\end{eqnarray*}
Clearly, the boundary term evaluated at $x=0$ is zero and since
$$\int_0^1d(x)~dx=\int_0^1d_N^n(x)~dx=1,$$
by \Ref{norm1}, then the boundary term evaluated at $x=1$
is also zero. Using the same methods as in the proof of Theorem \ref{thm2},
this integration by parts results in the error increasing by one power of
$h$ and so the error is $O(h^{n+2}$) as claimed.

For the second proof, we let $p_N^n=L_N^nd$ be the projection of the true 
invariant density $d$ onto the approximation space $\Delta_N^n$, where the
projection $L_N^n$ is defined by \Ref{eq:proj}. Then
\begin{eqnarray*}
|\sigma-\sigma_N^n|&=&\left|\int_0^1\log|g'(x)|(d(x)-p_N^n(x))~dx\right.\\
&&\left.+\int_0^1\log|g'(x)|(p_N^n(x)-d_N^n(x))~dx \right|\\
&\leq&\left|\int_0^1\log|g'(x)|(d(x)-p_N^n(x))~dx\right| \\
&& +\left|\int_0^1\log|g'(x)|(p_N^n(x)-d_N^n(x))~dx\right|.
\end{eqnarray*}
The first integral is of the type that we considered in Section
\ref{case2}. The stated conditions on $g$ ensure that $\log|g'(x)|$
satisfies the conditions of Theorem \ref{thm3} and so this Theorem gives
$$\int_0^1\log|g'(x)|(d(x)-p_N^n(x))~dx=\left\{\begin{array}{ll}
O(h^{n+3}), & n{~\rm odd}\\[2mm]
O(h^{n+2}), & n{~\rm even} \end{array}\right.$$
For the second integral, we first note that
$$p_N^n-d_N^n=L_N^n(d-d_N^n)=h^{n+1}\delta_N^n,$$
using \Ref{derror}, where
$$\delta_N^n=L_N^n\delta.$$
Integrating the second integral by parts then gives
\begin{eqnarray*}
\int_0^1\log|g'(x)|(p_N^n(x)-d_N^n(x))~dx&=&
\left[\log|g'(x)|\int_0^x p_N^n(x')-d_N^n(x')~dx' \right]_0^1\\
&&-h^{n+1}\int_0^1\frac{d}{dx}\log|g'(x)|\int_0^x\delta_N^n(x')~dx'~dx.
\end{eqnarray*}
Again, the boundary term at $x=0$ is zero and since
$$\int_0^1p_N^n(x)~dx=\int_0^1d_N^n(x)~dx=1,$$
by \Ref{norm1}, then the boundary term evaluated at $x=1$ 
is also zero. Using the same approach now as for the first proof above, the
error has increased by one power of $h$ and so this error is $O(h^{n+2})$.

Combining these two integrals, asymptotically the largest error is from the
second integral and so the error in the Lyapunov exponent is $O(h^{n+2})$
as claimed.
\end{proof}


\subsection{Example}

We consider again the map $g_1$ defined by \Ref{gdef} that we considered 
previously in Section \ref{examp}. We note that for this example, $|g_1'(x)|$
is continuous at the point $x=\sqrt{2}-1$, which is one of the conditions
of Theorem \ref{lethm}.

We used the method described in Section \ref{method} to obtain the
piecewise
polynomial approximation $d_N^n(x)$ to the density $d(x)$ and we then 
evaluated
$$\sigma_N^n=\int_0^1\log|g_1'(x)|d_N^n(x)~dx.$$
The calculations were done in {\sc Maple} and so the integration was
performed accurately by a {\sc Maple} routine rather than using Gaussian
quadrature.
 
Since we know the true density function for this map, we can also
accurately compute the true value of the Lyapunov exponent $\sigma$, which
is found numerically to have the value $\log 2$. 
The results of these computations are shown in Fig.\ \ref{fig:lyap_g1}.

\begin{figure}
\centering
\includegraphics[width=0.5\textwidth]{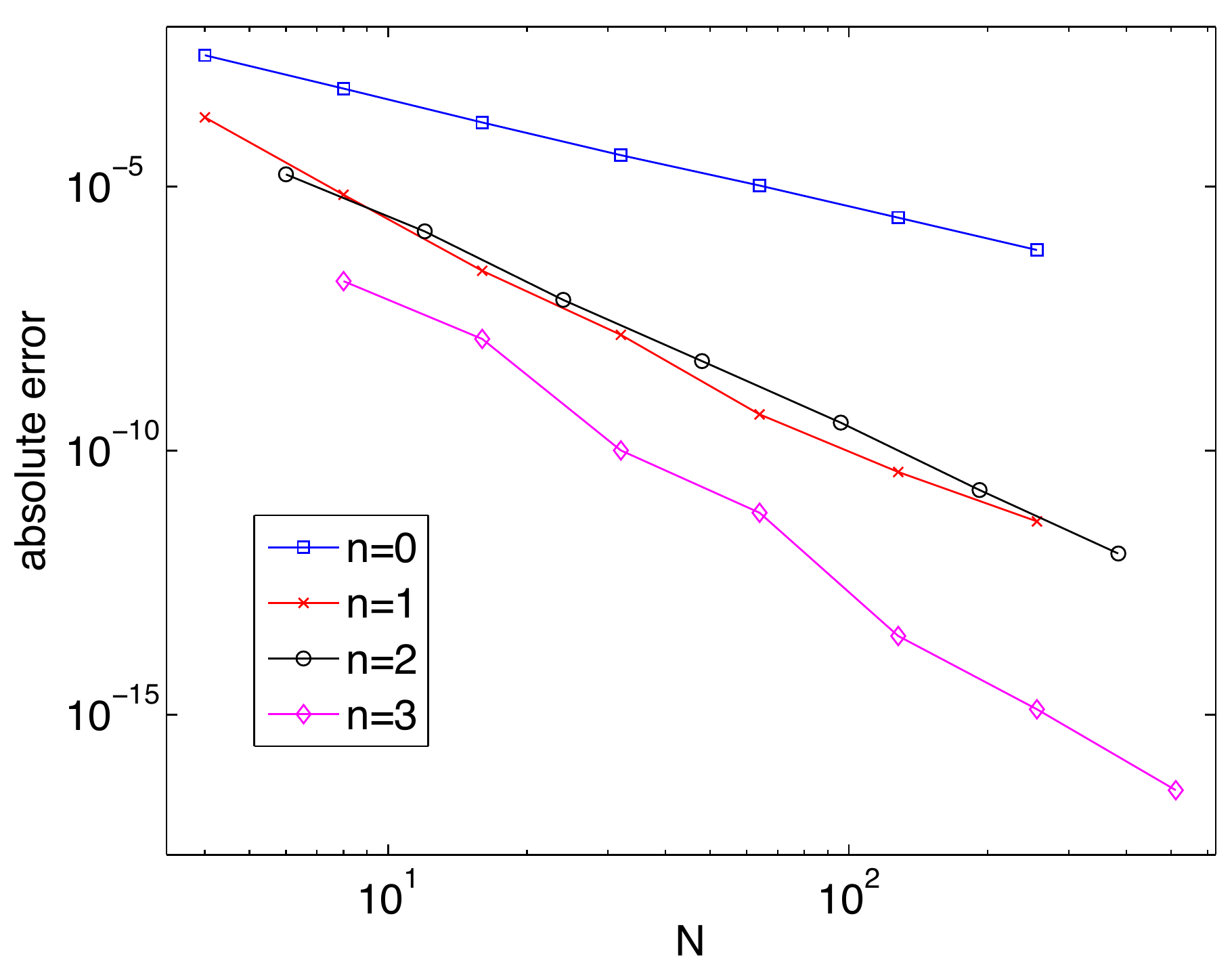}
\caption{Absolute errors in the computed values of the Lyapunov exponent
for the map $g_1(x)$.}
\label{fig:lyap_g1}
\end{figure}

The slope of the final segment of the lines for $n=0$ and $n=2$ are given
by $-2.031$ and $-4.001$ respectively, and so, for even values of $n$, 
the results agree well with the
prediction of Theorem \ref{lethm} that the rate of convergence is
$O(h^{n+2})$. However, for $n=1$, the rate of convergence initially is very
similar to that for $n=2$, which is $O(h^4)$, or equivalently $O(h^{n+3})$, 
and this is higher than that
predicted by Theorem \ref{lethm}. However, we note that the slope of the
final segment of the line for $n=1$ is $-3.105$, which is much closer to
the predicted rate of convergence of $O(h^3)$. Similarly, for $n=3$, the 
rate of
convergence estimate initially oscillates, but seems to be higher than the
prediction of $O(h^5)$. However, considering again the slope of the final 
segment of the line, we obtain a value of $-5.091$, which is close to the 
predicted rate of $O(h^5)$.

These results can be understood from the second proof of Theorem
\ref{lethm}. In that proof, the error was broken up into two terms. For $n$
even, the two separate terms were both $O(h^{n+2})$. However, for $n$ odd,
the terms were of different order so that approximately
$$|\sigma-\sigma_N^n|\approx c_1h^{n+3}+c_2h^{n+2}.$$
In this case, the asymptotic rate of convergence as $h\to 0$ is clearly
$O(h^{n+2})$. However, the rate of convergence that is observed for finite
values of $h$ also depends on the magnitude of the two constants $c_1$ and
$c_2$. If $c_1\gg c_2$, then for relatively ``large'' values of $h$, the
dominant term in the error will be the first one and so the rate of
convergence will appear to be $O(h^{n+3})$. However, as $h$ decreases,
eventually the higher power of $h$ will ensure that the first term becomes
smaller than the second, and then the true rate of convergence of
$O(h^{n+2})$ will be observed. Clearly, this is what is happening in our
example.

We note that even though the $n=1$ and $n=2$ cases have different
asymptotic rates of convergence, the actual magnitude of the errors shown
in Figure~\ref{fig:lyap_g1} are similar in these two cases. We have noted that for 
odd $n$, the first integral which is $O(h^{n+3})$ seems to dominate the 
errors for moderate values of $h$, and this implies that the error initially
decreases at a similar rate to that for the next even value of $n$. Thus,
even though theoretically the results for $n=2$ should be better than those
for $n=1$, and would
be better for sufficiently small $h$, for moderate values of $h$, the
results for these two values are comparable.

Similar results are obtained for the map $g_2(x)$ given by \Ref{g2}.

\section{Future Directions}

\subsection{Stochastic perturbations}

By construction, our approach requires the invariant density to be smooth.
For deterministic maps, this is not the generic situation. However, in
applications
one is often faced with a system which is additionally perturbed by (small)
random influences.  In these cases, instead of the deterministic system
$g$,
one considers the dynamics
$$x' = g(x) + \xi,$$
where $\xi$ is chosen from a given probability distribution $\mu$.  Suppose
that $\mu$ is absolutely continuous with density $h$.  Then the associated
Frobenius-Perron operator on $L^1$ is given by
\begin{equation}\label{eq:FP_noise}
Pf(x) = \int h(g(y)-x) f(y)\; dy, \quad f\in L^1.
\end{equation}
For common distributions
(like a normal distribution or the uniform distribution supported on a
small
ball  around $0$) the associated Markov chain possesses finitely many
invariant measures which, according to (\ref{eq:FP_noise}), inherit the
smoothness of the distribution \cite{DeJu99a}.

In this setting, the approach proposed in this
paper yields a highly accurate method for the approximation of the
invariant distribution. For very small perturbations and in the case of
a non-smooth invariant density, however, one will still need many modes
for this.  It would be interesting to investigate how the approximation
error behaves in dependence on the magnitude of the perturbation.

\subsection{Higher dimensions}

The methods described above for one-dimensional maps
can easily be generalised to higher dimensions. We briefly consider
only the case of a bilinear approximation to the density given the
total measure on four neighbouring squares in two dimensions. We assume 
that a change of
variables has been performed so that the region of interest is
$[-1,1]^2$. We take $t$ and $\tau$ as the two independent variables
and we want to approximate the density function $D(t,\tau)$ over this
region.

We first define $t_0=\tau_0=-1$, $t_1=\tau_1=0$ and $t_2=\tau_2=1$ and
assume that we know the four values
$$M_{ij}=\int_{\tau_{i-1}}^{\tau_i}\int_{t_{j-1}}^{t_j}D(t,\tau)~dtd\tau,
\quad i,j=1,2.$$
We then want to construct a bilinear approximation to the density
which preserves the total measure on each of the four subregions.
We write the polynomial approximation as
$$p_1(t,\tau)=\sum_{k,l=1}^2M_{kl}\ell_{1,k,l}(t,\tau).$$
The basis functions can be determined from the conditions
$$\int_{\tau_{i-1}}^{\tau_i}\int_{t_{j-1}}^{t_j}\ell_{1,k,l}(t,\tau)~dtd\tau
=\left\{\begin{array}{ll}0, & i\neq k~{\rm or}~j\neq l\\[3mm]
1, & i=k~{\rm and}~j=l \end{array}\right.\quad i,j=1,2.$$
These conditions give rise to the following basis functions:
\begin{eqnarray*}
\ell_{1,1,1}(t,\tau)&=&\frac{1}{4}-\frac{1}{2}t-\frac{1}{2}\tau+t\tau\\
\ell_{1,1,2}(t,\tau)&=&\frac{1}{4}+\frac{1}{2}t-\frac{1}{2}\tau-t\tau\\
\ell_{1,2,1}(t,\tau)&=&\frac{1}{4}-\frac{1}{2}t+\frac{1}{2}\tau-t\tau\\
\ell_{1,2,2}(t,\tau)&=&\frac{1}{4}+\frac{1}{2}t+\frac{1}{2}\tau+t\tau.
\end{eqnarray*}
This polynomial basis can then be used to approximate the invariant
density, again giving a discontinuous approximation over the whole region.
A similar error analysis as in the one-dimensional case can also be
performed.

\bibliographystyle{amsplain}

\begin{thebibliography}{99}

\bibitem{ArLuVi09a}
V.~Ara\'ujo, S.~Luzzatto, and M.~Viana.
\newblock Invariant measures for interval maps with critical points and
  singularities.
\newblock {\em Advances in Mathematics}, 221(5):1428 -- 1444, 2009.

\bibitem{BoMu01a}
C.~Bose and R.~Murray.
\newblock The exact rate of approximation in {U}lam's method.
\newblock {\em Discrete Contin. Dynam. Systems}, 7(1):219--235, 2001.

\bibitem{BrLuVa03a}
H.~Bruin, S.~Luzzatto, and S.~Van~Strien.
\newblock Decay of correlations in one-dimensional dynamics.
\newblock {\em Ann. Sci. \'Ecole Norm. Sup. (4)}, 36(4):621--646, 2003.

\bibitem{BrRiSh08a}
H.~Bruin, J.~Rivera-Letelier, W.~Shen, and S.~van Strien.
\newblock Large derivatives, backward contraction and invariant densities for
  interval maps.
\newblock {\em Invent. Math.}, 172(3):509--533, 2008.

\bibitem{BF10}
R.~L.~Burden and J.~D.~Faires.
{\em Numerical Analysis}, 9th edition.
Brooks Cole, 2010.

\bibitem{ChDuLi92a}
C. Chiu, Q. Du and T.Y. Li.
Error estimates of the Markov finite approximation of the Frobenius-Perron
operator.
{\em Nonlin. Anal. TMA} 19(4):291--308, 1992.

\bibitem{DeFrSe00a}
M.~Dellnitz, G.~Froyland, and S.~Sertl.
\newblock On the isolated spectrum of the {P}erron-{F}robenius operator.
\newblock {\em Nonlinearity}, 13(4):1171--1188, 2000.

\bibitem{DeHoJu97a}
M.~Dellnitz, A.~Hohmann, O.~Junge, and M.~Rumpf.
\newblock Exploring invariant sets and invariant measures.
\newblock {\em Chaos}, 7(2):221--228, 1997.

\bibitem{DeJu98a}
M.~Dellnitz and O.~Junge.
\newblock An adaptive subdivision technique for the approximation of attractors
  and invariant measures.
\newblock {\em Comput. Vis. Sci.}, 1(2):63--68, 1998.

\bibitem{DeJu99a}
M.~Dellnitz and O.~Junge.
\newblock On the approximation of complicated dynamical behavior.
\newblock {\em SIAM J. Numer. Anal.}, 36(2):491--515, 1999.

\bibitem{DiDuLi93a}
J.~Ding, Q.~Du, and T.~Y. Li.
\newblock High order approximation of the {Frobenius}-{Perron} operator.
\newblock {\em Appl.\ Math.\ Comp.}, {53}:151--171, 1993.

\bibitem{MR1278389}
J.~Ding, Q.~Du, and T.~Y. Li.
\newblock The spectral analysis of {F}robenius-{P}erron operators.
\newblock {\em J. Math. Anal. Appl.}, 184(2):285--301, 1994.

\bibitem{MR2319207}
J.~Ding, C.~Jin, and A.~Zhou.
\newblock A posteriori error estimates for {M}arkov approximations of
  {F}robenius-{P}erron operators.
\newblock {\em Nonlinear Anal.}, 67(3):763--772, 2007.

\bibitem{DiLi91a}
J.~Ding and T.-Y. Li.
\newblock Markov finite approximation of the {F}robenius-{P}erron operator.
\newblock {\em Nonlin. Anal., Theory, Meth. \& Appl.}, 17:759--772, 1991.

\bibitem{DiLi98a}
J.~Ding and T.~Y. Li.
\newblock A convergence rate analysis for {M}arkov finite approximations to a
  class of {F}robenius-{P}erron operators.
\newblock {\em Nonlinear Anal.}, 31(5-6):765--777, 1998.

\bibitem{DiLiZh02a}
J.~Ding, T.~Y. Li, and A.~Zhou.
\newblock Finite approximations of {M}arkov operators.
\newblock {\em J. Comput. Appl. Math.}, 147(1):137--152, 2002.

\bibitem{DiRh04a}
J.~Ding and N.~H. Rhee.
\newblock Approximations of {F}robenius-{P}erron operators via interpolation.
\newblock {\em Nonlinear Anal.}, 57(5-6):831--842, 2004.

\bibitem{DiZh96a}
J.~Ding and A.~Zhou.
\newblock Finite approximations of {F}robenius-{P}erron operators. {A} solution
  of {U}lam's conjucture on multi-dimensional transformations.
\newblock {\em Physica D}, 92:61--68, 1996.

\bibitem{DiZh99a}
J.~Ding and A.~Zhou.
\newblock A finite element method for the {F}robenius-{P}erron operator
  equation.
\newblock {\em Appl. Math. Comput.}, 102(2-3):155--164, 1999.

\bibitem{DiZh02a}
J.~Ding and A.~Zhou.
\newblock Structure preserving finite element approximations of markov
  operators.
\newblock {\em Nonlinearity}, 15(3):923--936, 2002.

\bibitem{Fr95a}
G.~Froyland.
\newblock Finite approximation of {Sinai-Bowen-Ruelle} measures for {Anosov}
  systems in two dimensions.
\newblock {\em Random Comp. Dyn.}, 3(4):251--263, 1995.

\bibitem{Fr96a}
G.~Froyland.
\newblock {\em Estimating Physical Invariant Measures and Space Averages of
  Dynamical Systems Indicators}.
\newblock PhD thesis, University of Western Australia, 1996.
\newblock Available from {\tt http://www.upb.de/math/\textasciitilde froyland}.

\bibitem{Fr98a}
G.~Froyland.
\newblock Approximating physical invariant measures of mixing dynamical systems
  in higher dimensions.
\newblock {\em Nonlinear Analysis, Theory, Methods, \& Applications},
  32(7):831--860, 1998.

\bibitem{Fr99a}
G.~Froyland.
\newblock Ulam's method for random interval maps.
\newblock {\em Nonlinearity}, 12(4):1029--1052, 1999.

\bibitem{GuDeKr97a}
R.~Guder, M.~Dellnitz, and E.~Kreuzer.
\newblock An adaptive method for the approximation of the generalized cell
  mapping.
\newblock {\em Chaos, Solitons and Fractals}, \textbf{8}(4):525--534, 1997.

\bibitem{HaLuWa02}
E.~Hairer, C.~Lubich and G.~ Wanner.
\newblock {\em Geometric Numerical Integration: Structure-Preserving 
Algorithms for Ordinary Differential Equations}.
\newblock Springer, Berlin, 2002.

\bibitem{Hu94a}
F.~Y. Hunt.
\newblock A {M}onte {C}arlo approach to the approximation of invariant
  measures.
\newblock {\em Random Comput. Dynam.}, 2(1):111--133, 1994.

\bibitem{Hu96a}
F.~Y. Hunt.
\newblock Approximating the invariant measures of randomly perturbed
  dissipative maps.
\newblock {\em J.\ Math.\ Anal.\ Appl.}, {198}(2):534--551, 1996.

\bibitem{Hu98a}
F.~Y. Hunt.
\newblock Unique ergodicity and the approximation of attractors and their
  invariant measures using {U}lam's method.
\newblock {\em Nonlinearity}, 11(2):307--317, 1998.

\bibitem{HuMi92a}
F.~Y. Hunt and W.~M. Miller.
\newblock On the approximation of invariant measures.
\newblock {\em J. Statist. Phys.}, 66(1-2):535--548, 1992.

\bibitem{Ju01a}
O.~Junge.
\newblock An adaptive subdivision technique for the approximation of attractors
  and invariant measures: proof of convergence.
\newblock {\em Dyn. Syst.}, 16(3):213--222, 2001.

\bibitem{JuKo08a}
O.~Junge and P.~Koltai.
\newblock Discretization of transfer operators using a sparse hierarchical
  tensor basis - the \protect{Sparse Ulam} method.
\newblock SINUM, to appear, 2009.

\bibitem{LaYo73a}
A.~Lasota and J.~A.~Yorke.
On the existence of invariant measures for piecewise monotonic transformations.
{\em Trans. Amer. Math. Soc.} {\bf 186}, 481-488, 1973.

\bibitem{Li76a}
T.-Y. Li.
\newblock Finite approximation for the {Frobenius-Perron} operator. {A}
  solution to {Ulam's} conjecture.
\newblock {\em J. Approx. Theory}, 17:177--186, 1976.

\bibitem{Mu97a}
R.~Murray.
\newblock {\em Discrete approximation of invariant densities}.
\newblock PhD thesis, University of Cambridge, 1997.

\bibitem{R63}
A.~Ralston. 
On differentiating error terms.
{\em Am. Math. Mon.} {\bf 70}, 187-189, 1963.

\bibitem{RR78}
A.~Ralston and P.~Rabinowitz. 
{\em A First Course in Numerical Analysis}, 2nd edition.
McGraw-Hill, 1978.

\bibitem{S50}
J.~F.~Steffensen. 
{\em Interpolation}, 2nd edition.
Chelsea Publishing Company, New York, 1950.

\bibitem{Ul60a}
S.~M.~Ulam.
\newblock {\em A Collection of Mathematical Problems}.
\newblock Interscience Publisher NY, 1960.

\bibitem{W65}
J.~H.~Wilkinson.
{\em The Algebraic Eigenvalue Problem}. 
OUP, 1965.

\end{thebibliography}

\end{document}